\begin{document}

\title[A global definition of quasinormal modes for Kerr--AdS Black Holes]
{A global definition of quasinormal modes for Kerr--AdS Black Holes}
\author{Oran Gannot}
\email{gannot@northwestern.edu}
\address{Department of Mathematics, Lunt Hall, Northwestern University,
Evanston, IL 60208, USA}

\begin{abstract}
The quasinormal frequencies of massive scalar fields on Kerr--AdS black holes are identified with poles of a certain meromorphic family of operators, once boundary conditions are specified at the conformal boundary. Consequently, the quasinormal frequencies form a discrete subset of the complex plane and the corresponding poles are of finite rank. This result holds for a broad class of elliptic boundary conditions, with no restrictions on the rotation speed of the black hole.            
\end{abstract}

\maketitle

\section{Introduction}
The study of quasinormal modes (QNMs) has proven useful in understanding long-time behavior of linearized perturbations throughout general relativity. These modes are solutions of the linear wave equation with harmonic time-dependence, subject to outgoing boundary conditions at event horizons.  Associated to each QNM is a complex quasinormal frequency (QNF) which determines the time evolution of a QNM: the real part describes the mode of oscillation, while the imaginary part corresponds to exponential decay or growth in time. 

The QNF spectrum depends on black hole parameters (such as cosmological constant, rotation speed, and mass), but not the precise nature of the perturbation. The distribution of QNFs in the complex plane is expected to dictate the return to equilibrium for linearized perturbations. This follows established tradition in scattering theory, where QNFs typically go by the name of scattering poles or resonances.

In particular, there has been a great deal of interest in the QNMs of asymptotically anti-de Sitter (AdS) black holes, motivated both by developments in the AdS/CFT program and by closely related questions in classical gravitation \cite{hawking:1999,konoplya:2011,witten:1998}. Understanding perturbations of such black holes is a common thread in both the physics and mathematics literature. 

According to the proposed holographic correspondence, a black hole in an AdS background is dual to a thermal state on the conformal boundary. Behavior of perturbations in the bulk therefore yields predictions on thermalization timescales for the dual gauge theory which are difficult to calculate within the strongly coupled field theory. It is also important to note that QNMs have a distinguished interpretation in the AdS/CFT correspondence \cite{cardoso:2014:jhep,horowitz:2000:prd}.

Additionally, a major unsolved problem in mathematical general relativity is the nonlinear instability of global anti-de Sitter space, in the sense that a generic perturbation of such a metric will grow and form a black hole \cite{balasubramanian2014holographic,bizon:2014:grg,bizon2015resonant,bizon:2011:prl,buchel2015conserved,craps:2014vaa,craps:2014jwa,dias:2012:cqg}. If AdS is indeed unstable, a natural question is whether the endpoint of instability is a Kerr--AdS black hole. Both of these subjects have motivated substantial interest in the nonlinear instability (or stability) of Kerr--AdS \cite{cardoso:2014:jhep,dias:2012:cqg:b,dias:2012:cqg,holzegel:2015,holzegel2013decay,holzegel:2013:cmp,holzegel2014quasimodes,holzegel:2012wt}. In particular, Holzegel--Smulevici established logarithmic decay of massive scalar fields on Kerr--AdS backgrounds \cite{holzegel2013decay} (with Dirichlet conditions imposed at the conformal boundary), and then demonstrated the optimality of this decay rate \cite{holzegel2014quasimodes} (see also \cite{gannot:2014} for the Schwarzschild--AdS case). This slow decay rate led to the conjecture that Kerr--AdS itself is nonlineary unstable.

This paper continues the study of scalar perturbations of Kerr--AdS black holes. The relevant linear equation to be solved is the Klein--Gordon equation
\begin{equation} \label{eq:kg}
\Box_g \phi  + \frac{m^2}{l^2}\phi = 0,
\end{equation}
where $l$ is related to the negative cosmological constant by $l^2 = 3/|\Lambda|$. The purpose of this paper is to provide a robust definition of QNFs for Kerr--AdS metrics which does not depend on any extra symmetries (separation of variables), and then show that the QNF spectrum forms a discrete subset of the complex plane. This means studying solutions to \eqref{eq:kg} of the form $\phi = e^{-i\lambda t^\star}u$, where $\lambda \in \mathbb{C}$ and $u$ is a stationary function, identified with its values on the time slice $\{ t^\star = 0 \}$; here $t^\star$ is a time coordinate which is regular across the event horizon. A critical observation is that the outgoing condition is equivalent to a certain smoothness requirement for $u$ at the event horizon.

Since the conformal boundary of an asymptotically AdS spacetime is timelike, there is no reason for the set of QNFs to be discrete unless \eqref{eq:kg} is augmented by boundary conditions at the conformal boundary. Choosing appropriate boundary conditions is a subtle point depending on $m^2$. When $m^2 \geq -5/4$ it suffices to rule out solutions which are not square integrable. On the other hand, when $-9/4 < m^2 < -5/4$ the problem is underdetermined and boundary conditions must be imposed.

This paper uses recent advances in the microlocal study of wave equations on black hole backgrounds due to Vasy \cite{vasy:2013} to study global Fredholm properties of the time-independent problem. Upon verifying some dynamical assumptions on the null-geodesic flow of Kerr--AdS metrics, the approach of \cite{vasy:2013} provides certain estimates for the stationary operator corresponding to \eqref{eq:kg},  at least away from the conformal boundary. Compared to recent work of Warnick \cite{warnick:2015:cmp} on QNFs of AdS black holes, there is no restriction on the rotation speed of the black hole --- see Section \ref{subsect:previous} below for more on the differences between \cite{warnick:2015:cmp} and this paper.

In Sections \ref{subsect:bessel} and \ref{subsect:adsellipticestimate}, a theory of boundary value problems for some singular elliptic operators, developed in \cite{gannot:bessel}, is reviewed. This theory applies in the Kerr--AdS setting. When the boundary conditions satisfy a type of Lopatinski{\v\i} condition for $-9/4 < m^2 < -5/4$, the results of \cite{gannot:bessel} provide elliptic estimates near the boundary --- see Section \ref{subsect:adsellipticestimate} for more details. These boundary conditions account for the majority of those considered in the physics literature \cite{avis:1978:prd,berkooz:2002:jhep,breitenlohner:1982:plb,breitenlohner:1982:ap,dias:2013:jhep,ishibashi:2004:cqg,witten2001}. This substantially generalizes the self-adjoint Dirichlet or Robin boundary conditions considered in \cite{warnick:2015:cmp}. In particular, certain time-periodic boundary conditions are admissible.

Combining estimates near the boundary with those in the interior suffices to prove the Fredholm property for the stationary operator. The inverse of this operator forms a meromorphic family, and QNFs are then defined as poles of that family. Having shown that QNFs are well defined spectral objects, a natural question is how they are distributed in the complex plane. The companion paper \cite{gannot2017} establishes the existence of QNFs converging exponentially to the real axis, generalizing the results of \cite{gannot:2014}.

A simplified discussion of Vasy's method in the slightly less involved asymptotically hyperbolic setting can found in \cite{zworski:resonances}, although the approach to proving meromorphy there differs from that of this paper.

\subsection{Main results} \label{subsect:mainresults}

For notation, the reader is referred to Section \ref{sect:kerradsmetric}. Let $\mathcal{M}_0$ denote the exterior of a Kerr--AdS spacetime with metric $g$ determined by parameters $(l, a, M)$. After modifying the original Boyer--Lindquist time slicing, there always exists an extension of $g$ across the event horizon $\mathcal{H}^+ = \{ r = r_+\}$ to a larger spacetime $\mathcal{M}_\delta$, such that the time slice $X_\delta = \{ t^\star= 0\}$ is spacelike. In the extended picture $g$ is smooth up to $\mathcal{H}_\delta = \{ r = r_+ - \delta\}$ for any sufficiently small $\delta \geq 0$.

The stationary Klein--Gordon operator $P(\lambda)$ is defined on $X_\delta$ by replacing $D_{t^\star}$ with a spectral parameter $-\lambda \in \mathbb{C}$ in the operator $r^2(\Box_g + m^2/l^2)$. Solutions of $P(\lambda)u = 0$ correspond to solutions $\phi = e^{-i\lambda t^\star}u$ of \eqref{eq:kg}. Even if one is only interested in $P(\lambda)$ acting on $X_0$, it is technically important to consider its extension to $X_\delta$. 

The prefactor $r^2$ in the definition of $P(\lambda)$ appears naturally when formulating energy identities for \eqref{eq:kg} (see \cite[Lemma 4.1.1]{warnick:2013:cmp},  \cite[Section 4]{holzegel:2012wt}, \cite[Sections 2, 3]{warnick:2015:cmp}), and does not affect solutions to the homogeneous Klein--Gordon equation; it could be replaced by any strictly positive function growing like $r^2$ at infinity. Finite energy solutions $e^{-i\lambda t^\star}u$ (as measured by the energy-momentum tensor) satisfy
\begin{equation} \label{eq:L2space}
\int_{X_\delta} |u|^2 \, r^{-1}\, dS_t < \infty,
\end{equation}
where $dS_t$ is the induced measure on $X_\delta$. The space $\mathcal{L}^2(X_\delta)$ of square integrable functions is defined with respect to the rescaled measure $r^{-1}\,dS_t$. Alternatively, the notation $\Sob^0(X_\delta) = \mathcal{L}^2(X_\delta)$ will be used at times.

The mass $m$ is required to satisfy the Breintenlohner--Freedman bound 
\[
m^2 > -9/4.
\] This restriction has a variety of consequences for the study of massive waves on asymptotically AdS spaces; in this paper, the bound must be satisfied in order to apply the results of \cite{gannot:bessel} on certain singular elliptic boundary value problems. An important related quantity is the effective mass $\nu >0$ defined by $\nu^2 = m^2 + 9/4$.

In order to define the (stationary) energy space, observe that the conformal multiple $r^{-2} g$ extends smoothly up to $\scri = \{ r^{-1} = 0\}$. Then $\mathcal{M}_\delta$ can be viewed as the interior of a manifold $\OL{\mathcal{M}}_\delta$ with two boundary components, 
\[
\partial \mathcal{M}_\delta = \scri \cup \mathcal{H}_\delta.
\] 
The set $\{ t^\star = 0 \}$ within $\OL{\mathcal{M}}_\delta$ defines a compact spacelike (with respect to $r^{-2} g$) hypersurface $\OL{X}_\delta$ with interior $X_\delta$ and boundary $\partial X_\delta = H_\delta \cup Y$, where
\[
H_\delta = \mathcal{H}_\delta \cap \OL{X}, \quad Y = \scri \cap \OL{X}.
\]
Given $\nu >0$, let $\Sob^1(X_\delta)$ denote the space of all $u \in \Sob^0(X_\delta)$ such that the conjugated derivative $r^{\nu-3/2}d(r^{3/2-\nu} u)$ lies in $\Sob^0(X_\delta)$, where the magnitude of a covector is measured by a smooth inner product on $T^*\OL{X}_\delta$ (by compactness of $\OL{X}_\delta$ this does not depend on choices).

With a view towards energy estimates, the twisted Sobolev space $\Sob^1(X_\delta)$ was introduced in \cite{warnick:2013:cmp} to define a finite energy for ``Neumann'' boundary conditions (in the sense of \cite[Section 1]{warnick:2013:cmp}), extending work of Breitenlohner--Freedman \cite{breitenlohner:1982:plb,breitenlohner:1982:ap}. See also \cite[Section 3]{warnick:2013:cmp} for additional motivation. In the elliptic setting, boundary value problems on twisted Sobolev spaces were studied in \cite{gannot:bessel}. 

Spaces with higher regularity are defined as follows: given $s=0,1$, let $\Sob^{s,k}(X_\delta)$ denote the space of all $u \in \Sob^{s}(X_\delta)$ such that $V_1 \cdots V_N u \in \Sob^{s}(X_\delta)$, where $V_1,\ldots,V_N$ is any collection of at most $k$ vector fields on $\OL{X}_\delta$ which are tangent to $Y$. Finally, set
\[
\mathcal{X}^k(X_\delta) = \{ u \in \Sob^{1,k}(X_\delta): P(0)u \in \Sob^{0,k}(X_\delta) \}.
\]
All the results in this paper require that $\mathcal{H}^+$ is a nonextremal horizon, meaning that the surface gravity $\varkappa$ associated to the horizon is positive. Explicitly,
\[
\varkappa = \frac{\Delta_r'(r_+)}{2(1-\alpha)(r_+^2+a^2)}.
\] 
The first result, valid for $\nu \geq 1$, allows for the definition of QNFs; it is stated for $P(\lambda)$ acting on the exterior time slice $X_0$.  
\begin{theo} \label{theo:maintheo1}
	If $\nu \geq 1$ and $k \in \mathbb{N}$, then 
	\[
	P(\lambda) : \mathcal{X}^k(X_0) \rightarrow \Sob^{0,k}(X_0)
	\]
	is Fredholm for $\lambda$ in the half-plane $\{ \Im \lambda > - \varkappa (k+1/2)\}$. Furthermore, given any angular sector $\Lambda \subseteq \mathbb{C}$ in the upper half-plane, there exists $R>0$ such that $P(\lambda)$ is invertible for $\lambda \in \Lambda$ and $|\lambda| > R$. 
\end{theo}

\noindent By analytic Fredholm theory, the family $\lambda \mapsto P(\lambda)^{-1}$ is meromorphic. QNFs in the half-plane $\{\Im \lambda > -\varkappa(k+1/2)\}$ are defined as poles of $P(\lambda)^{-1}: \Sob^{0,k}(X_0) \rightarrow \mathcal{X}^k(X_0)$. These poles are discrete and the corresponding residues are finite rank operators. QNMs are then elements of the finite dimensional space $\ker P(\lambda)|_{\mathcal{X}^k(X_0)}$.

Furthermore, any QNM $u\in \mathcal{X}^k(X_0)$ is smooth up to $H_0$, provided the threshold condition $\Im \lambda > -\varkappa(k+1/2)$ is satisfied; this is demonstrated during the proof of Theorem \ref{theo:maintheo1} in Section \ref{subsect:passing}. In particular, if $k' \geq k$, then the poles of 
\[
P(\lambda)^{-1}|_{\Sob^{0,k}(X_0)}, \quad P(\lambda)^{-1}|_{\Sob^{0,k'}(X_0)}
\] 
in $\{\Im \lambda > -\varkappa(k+1/2)\}$ coincide, and at regular points $P(\lambda)^{-1}|_{\Sob^{0,k}(X_0)}$ is the extension by continuity of $P(\lambda)^{-1}|_{\Sob^{0,k'}(X_0)}$. In this sense the QNF spectrum is a well defined subset of $\mathbb{C}$.  Finally, QNMs have conormal asymptotic expansions at $Y$ \cite[Proposition 4.17]{gannot:bessel}.

The analogous statement when $0 < \nu < 1$ is more involved since boundary conditions (in the sense of Bessel operators, see Section \ref{subsubsect:boundaryoperators}) must be imposed at the conformal boundary $Y$ to obtain a Fredholm problem. Fix a weighted trace $T(\lambda)$ whose ``principal part'' is independent of $\lambda$ and let
\[
\mathscr{P}(\lambda) = \begin{pmatrix} P(\lambda) \\ T(\lambda) \end{pmatrix}.
\]
The trace operator $T(\lambda)$ has an ``order'' $\mu$ (which depends on $\nu$) such that a priori 
\[
T(\lambda) : \mathcal{X}^k(X_0) \rightarrow H^{k+1-\mu}(Y)
\]
is bounded. The operator $\mathscr{P}(\lambda)$ is required to satisfy the parameter-dependent Lopatinski{\v{\i}} condition (again in the sense of Bessel operators, see Section \ref{subsubsect:lopatinskii}) with respect to an angular sector $\Lambda \subseteq \mathbb{C}$ in the upper half-plane.

\begin{theo} \label{theo:maintheo2}
If $0 < \nu < 1$ and $k \in \mathbb{N}$, then 
\[
\mathscr{P}(\lambda): \{u \in \mathcal{X}^k(X_0): T(0)u \in H^{k+2-\mu}(Y) \} \rightarrow \Sob^{0,k}(X_0) \times H^{k+2-\mu}(Y)
\] 
is Fredholm for $\lambda$ in the half-plane $\{ \Im \lambda > -\varkappa(k+1/2)\}$. Furthermore, given any angular sector $\Lambda \subseteq \mathbb{C}$ in the upper half-plane with respect to which $\mathscr{P}(\lambda)$ is parameter-elliptic, there exists $R>0$ such that $\mathscr{P}(\lambda)$ is invertible for $\lambda \in \Lambda$ and $|\lambda| > R$.
\end{theo}

\noindent QNFs in the half-plane $\{ \Im \lambda > -\varkappa(k+1/2)\}$ are again defined as poles of the meromorphic family $\lambda \mapsto \mathscr{P}(\lambda)^{-1}$. The observations following Theorem \ref{theo:maintheo1} are also applicable.

The importance of considering an extended spacetime $\mathcal{M}_\delta$ is that Theorems \ref{theo:maintheo1}, \ref{theo:maintheo2} are established by first demonstrating their validity on $X_\delta$, with $\delta >0$ strictly positive:

\begin{theo} \label{theo:mainextended}
	Theorems \ref{theo:maintheo1}, \ref{theo:maintheo2} hold true if $X_0$ is replaced by $X_\delta$, where $\delta>0$ is sufficiently small.
\end{theo}
  
\noindent In light of Theorem \ref{theo:mainextended}, it is natural to consider the relationship between the QNF spectrum (defined here as the poles of $P(\lambda)^{-1}$ or $\mathscr{P}(\lambda)^{-1}$ acting on $X_0$) and the poles of the extended inverses acting on $X_\delta$ with $\delta>0$. One implication is clear: QNFs are contained in the set of poles of the extended inverse, since surjectivity on $X_\delta$ implies surjectivity on $X_0$ and the index of both operators is zero. 

The answer to the converse question was suggested to the author by Peter Hintz; unlike the other results of this paper, it strongly uses axisymmetry of the exact Kerr--AdS metric to reduce to the case of \cite[Lemma 2.2]{hintz2015asymptotics}. To begin, define the axisymmetric distributions 
\[
\mathcal{D}'_m(X_\delta) = \{ u \in \mathcal{D}'(X_\delta) : (D_\phi - m)u = 0 \}
\]
for $m \in \mathbb{Z}$, each of which is invariant under $P(\lambda)$.

\begin{theo} \label{theo:maintheo3}
Fix $\delta > 0$. If $\nu \geq 1$ and $\Im \lambda > -\varkappa(k+1/2)$, then for each $m \in \mathbb{Z}$ the restriction map 
\[
\ker P(\lambda)|_{\mathcal{X}^k(X_\delta) \cap \mathcal{D}'_m(X_\delta)}\rightarrow \ker P(\lambda)|_{\mathcal{X}^k(X_0) \cap \mathcal{D}'_m(X_0)}
\] 
given by $u \mapsto u|_{X_0}$ is a bijection. The same is true for $0 < \nu < 1$ when $P(\lambda)$ is replaced with $\mathscr{P}(\lambda)$, provided $T(\lambda)$ is axisymmetric in the sense that $T(\lambda)\circ D_\phi= D_\phi \circ T(\lambda)$.
\end{theo}


%
%
%

\subsection{Relation to previous works} \label{subsect:previous}

The mathematical study of QNMs for AdS black holes began slightly later than for their nonnegative cosmological constant counterparts. QNMs of Schwarzschild black holes were rigorously studied by Bachelot \cite{bachelot:1991} and Bachelot--Motet-Bachelot \cite{bachelot:1993}. Meromorphy of the scattering resolvent for Schwarzschild--de Sitter black holes was established by S\'{a} Barreto--Zworski \cite{barreto:1997}, who also described the lattice structure of QNFs. Expansions of scattered waves in terms of QNMs were established for Schwarzschild--de Sitter space by Bony--H\"afner \cite{bony:2008}. Later, Dyatlov constructed a meromorphic continuation of the scattering resolvent for Kerr--de Sitter metrics and analysed the distribution of QNFs \cite{dyatlov:2011:cmp,dyatlov:2012}. 

All of the aforementioned works used delicate separation of variables techniques to study QNMs, hence are not stable under perturbations. In a landmark paper \cite{vasy:2013}, Vasy proved meromorphy of a family of operators whose poles define QNFs of Kerr--de Sitter metrics. This method depends only on certain microlocal properties of the geodesic flow, which are stable under perturbations. Additionally, resolvent estimates, expansions of waves in terms of QNMs, and wavefront set properties of the resolvent were also established (not to mention other applications, for instance to asymptotically hyperbolic spaces).
\begin{center}
	\begin{figure}
			 \adjustbox{trim={0cm} {.8cm} {0cm} {.75cm},clip}{\includegraphics[width=2.5in]{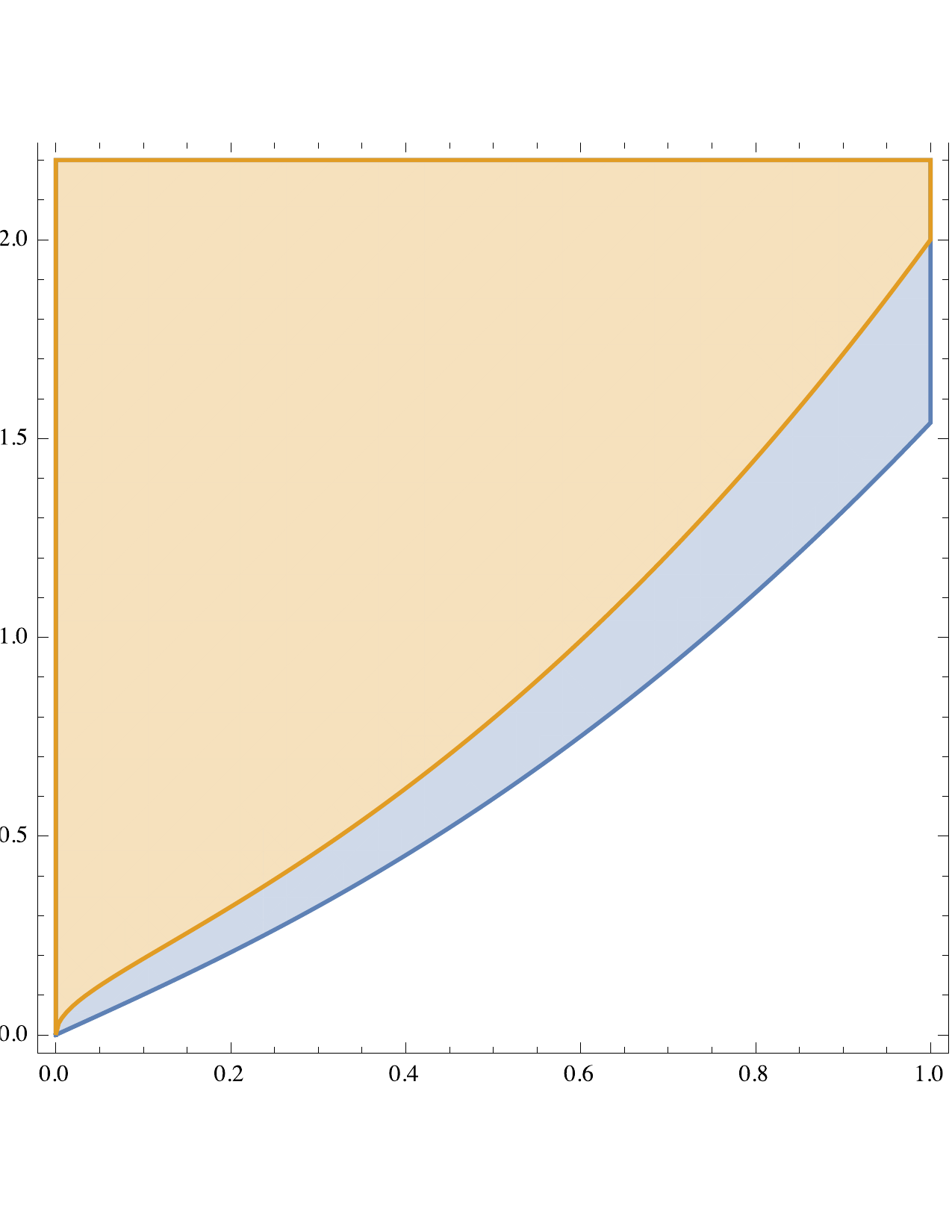}}
		\hspace{0.3in} \adjustbox{trim={0cm} {.8cm} {0cm} {.8cm},clip}{\includegraphics[width=2.5in]{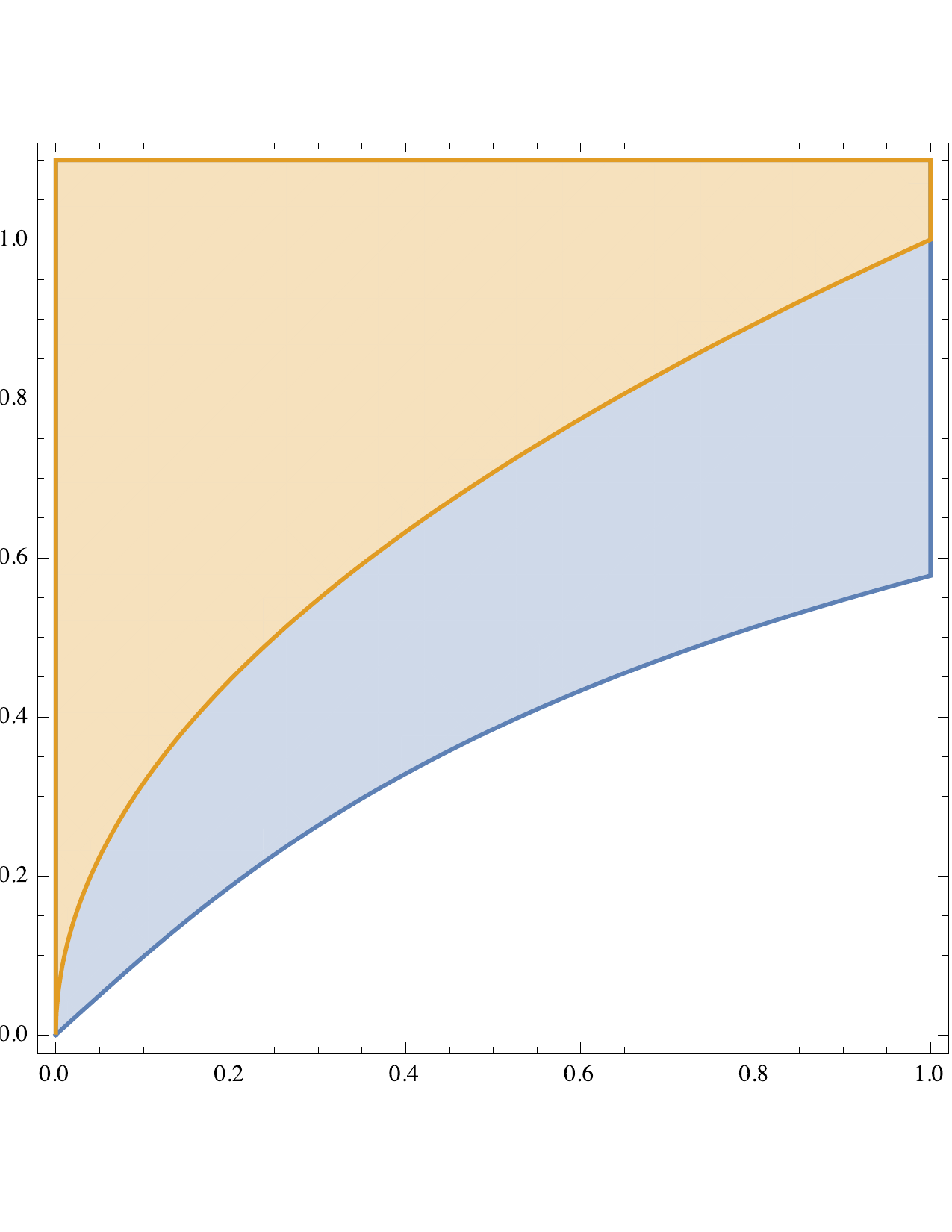}}
		\caption{Two plots showing the range of parameters $(a,l,M)$, or equivalently $(a,l,r_+)$, for which meromorphy holds. On the left is a plot of $|a|/l$ vs. $M/l$ and on the right is a plot of $|a|/l$ vs. $r_+/l$. The orange region is the regime $r_+^2 > |a| l$ for which meromorphy was established in \cite{warnick:2015:cmp}. The addition of the blue region represents the full range of admissible parameters.}
		\label{fig:test}
	\end{figure}
\end{center}

For non-rotating Schwarzschild--AdS black holes, QNMs were treated mathematically by the author in \cite{gannot:2014} using the Regge-Wheeler formalism \cite{giammatteo:2005} (separation of variables). The Regge--Wheeler equations at a fixed angular momentum $\ell$ in the nonrotating case fit into the framework of classical one-dimensional scattering theory. It was shown that the scattering resolvent exists and its restriction to a fixed space of spherical harmonics forms a meromorphic family of operators \cite[Section 4]{gannot:2014}. Therefore discreteness of QNFs for $\ell$ fixed is solved by identifying them as poles of this resolvent. Furthermore, there exist sequences of QNFs converging exponentially to the real axis, with a precise description of their real parts. In \cite{gannot:2014}, only Dirichlet boundary conditions were considered at the conformal boundary.

For general black hole backgrounds with asymptotically AdS ends, a global definition and discreteness of QNFs were studied by Warnick \cite{warnick:2015:cmp}. There, QNFs are defined as eigenvalues of an infinitesimal generator whose associated semigroup solves a mixed initial boundary value problem for the linear wave equation. When applied to the special class of Kerr--AdS metrics, there are two main results:
\begin{enumerate} \itemsep6pt
	\item QNFs at a \emph{fixed} axial Fourier mode $m \in \mathbb{Z}$ are discrete. This holds for all rotation speeds satisfying the regularity condition $|a| < l$. More generally, it holds for a more general class of ``locally stationary'' asymptotically AdS black holes, once the notion of a Fourier mode is appropriately generalized --- these spacetimes have some additional symmetries.   
	\item The set of all QNFs is discrete provided the rotation speed satisfies the Hawking--Reall bound $|a| < \min\{l,r_+^2/l\}$. These Kerr--AdS metrics admit a globally causal Killing field; this remarkable property is not shared by either the Kerr or Kerr-de Sitter family of metrics as soon as $a \neq 0$.
\end{enumerate} 
Furthermore, self-adjoint boundary conditions of Dirichlet or Robin type could be imposed at the conformal boundary. As mentioned above, this paper generalizes \cite{warnick:2015:cmp} in two ways: the QNF spectrum is shown to be discrete for rotation speeds satisfying $|a|<l$, and when $0< \nu < 1$ this discreteness holds for a broader class of boundary conditions than considered in \cite{warnick:2015:cmp}.

\section{Preliminaries}

\subsection{Microlocal preliminaries} \label{subsect:microlocal}
The purpose of this section is to fix notation for the necessary microlocal analysis. For a detailed introduction to this subject, the reader is referred to \cite[Section 18.1]{hormanderIII:1985}, \cite[Chapter 1]{shubin:2001}.

If $X$ is a smooth manifold, $\Psi^m(X)$ will denote the algebra of properly supported pseudodifferential operators of order $m$ on $X$. Denote by $\sigma_m$ the principal symbol map, fitting into the usual short exact sequence
\[
0 \rightarrow \Psi^{m-1}(X) \rightarrow \Psi^{m}(X) \xrightarrow{\sigma_m} S^m(T^*X)/S^{m-1}(T^*X) \rightarrow 0,
\]
where $S^m(T^*X)$ is the space of Kohn--Nirenberg symbols on $T^*X$. In applications, all pseudodifferential operators will be compactly supported, namely their Schwartz kernels have compact support in $X\times X$.

%
%
%
%

Let $S^*X = (T^*X \setminus 0)/\mathbb{R}_+$ denote the cosphere bundle, where $\mathbb{R}_+$ acts on $T^*X \setminus 0$ by positive dilations in the fibers. Conic subsets of $T^*X \setminus 0$ are in one-to-one correspondence with subsets of $S^*X$ via the canonical projection
\[
\kappa: T^*X \setminus 0 \rightarrow S^*X.
\]
If $a \in S^m(T^*X)$ is homogeneous of degree $m$ in the fibers, then the integral curves of the Hamilton vector field $H_a$ through $(x,\xi)$ and $(x,\mu \xi)$ with $\mu > 0$ have the same image in $S^*X$. Furthermore, the vector field $H_a$ is homogeneous of degree $m-1$, so $|\xi|^{1-m}H_a$ descends to a vector field on $S^*X$, and integral curves of $H_a$ on $T^*X \setminus 0$ are uniquely determined by those of $|\xi|^{1-m}H_a$ on $S^*X$ (up to parametrization); here $|\cdot|$ is a fixed norm on the fibers of $T^*X$.

 A symbol $a \in S^m(T^*X)$ is said to be elliptic at $(x_0,\xi_0) \in T^*X\setminus 0$ if there exists an open conic neighborhood $U\subseteq T^*X\setminus 0$ of $(x_0,\xi_0)$ such that $|\xi|^{-m}|a| \geq c > 0$ in $U$, provided $|\xi|$ is sufficiently large. This condition does not change if $a$ is modified by an element of $S^{m-1}(T^*X)$. If $a = \sigma_m(A)$, then $\ELL(A)\subseteq T^*X\setminus 0$ will denote the set of elliptic points of $a$. The characteristic set $\Sigma(A)$ is the complement in $T^*X\setminus 0$ of $\ELL(A)$, which is thus a closed conic subset of $T^*X\setminus 0$. If $a$ is homogeneous of degree $m$, then $(x_0,\xi_0) \in \Sigma(A)$ if and only if $a(x_0,\xi_0)= 0$.
 
 Given $a \in S^m(T^*X)$, say that $(x_0,\xi_0) \in T^*X\setminus 0$ is not in the essential support of $a$ if there exists an open conic neighborhood $U \subseteq T^*X \setminus 0$ of $(x_0,\xi_0)$ such that
 \[
 |a(x,\xi)| \leq C_N \left<\xi \right>^{-N}
 \] 
 for each $N$, uniformly near $U$. The wavefront set $\WF(A)$ of $A \in \Psi^m(X)$ is defined as the essential support of its full symbol in any local coordinate chart. Thus $A$ is negligible outside of $\WF(A)$ in a precise microlocal sense.
 
The simplest microlocal estimate controls $u$ in some region of phase space in terms $Pu$, provided $P$ is elliptic in a neighborhood of that region. More precisely, one has the following standard elliptic estimate:
 \begin{prop} [{\cite[Theorem 18.1.24']{hormanderIII:1985}}] \label{prop:microelliptic} Suppose that $P \in \Psi^m(X)$ is properly supported, $A,\, G \in \Psi^0(X)$ are compactly supported, and
 \[
 \WF(A) \subseteq \ELL(P) \cap \ELL(G).
 \]	
 If $u \in \mathcal{D}'(X)$ satisfies $GPu \in H^{s-m}(X)$ for some $s$, then $Au \in H^s(X)$. Moreover, there exists $\chi \in C_c^\infty(X)$ such that
 \begin{equation} \label{eq:ellipticestimate}
 \| Au \|_{H^s(X)} \leq C \left(\|GPu \|_{H^{s-m}(X)} + \| \chi u \|_{H^{-N}(X)} \right)
 \end{equation}
 for each $N$.
 \end{prop}
 
 Observe that each of the terms in \eqref{eq:ellipticestimate} has support in a fixed compact subset of $X$, hence there is no ambiguity in the Sobolev norms.

 Next is the Duistermaat--H\"ormander theorem on propagation of singularities.
 
 \begin{prop} [{\cite[Theorem 26.1.4]{hormander2009analysis}}] \label{prop:microprop}
 Suppose that $P \in \Psi^m(X)$ is properly supported and $A,\,B,\,G \in \Psi^0(X)$ are compactly supported. Assume that $\sigma_m(P)$ has a real-valued homogeneous representative $p$, and that for each $(x,\xi) \in \WF(A)$ there exists $T\geq 0$ such that 
 \begin{itemize} \itemsep6pt
 	\item $\exp(T  H_p)(x,\xi) \in \ELL(B)$,
 	\item $\exp(t H_p)(x,\xi) \in \ELL(G)$ for each $t \in [0,T]$.
 \end{itemize}
 If $u \in \mathcal{D}'(X)$ satisfies $GPu \in H^{s-m+1}(X)$ and $Bu \in H^s(X)$, then $Au \in H^s(X)$. Moreover, there exists $\chi \in C_c^\infty(X)$ such that
 \[
 \| Au \|_{H^s(X)} \leq C \left( \|GPu \|_{H^{s-m+1}(X)} + \| Bu \|_{H^s(X)}  + \| \chi u \|_{H^{-N}(X)} \right)
 \]
 for each $N$.
\end{prop}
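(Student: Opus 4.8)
The plan is to run H\"ormander's positive commutator argument in its semiclassical form, the sign hypothesis on $q$ entering only through the sharp G\aa rding inequality; since this is by now standard (compare \cite[Appendix~E]{zworski:resonances} and \cite{vasy:2013}) I will only lay out the structure. First I would reduce to the case of the upper sign: replacing $P$ by $-P$ --- which sends $p\mapsto -p$, hence $H_p\mapsto -H_p$, and $q\mapsto -q$ --- turns the lower-sign case into the upper-sign case, so I may assume $q\ge 0$ on $\WF(G)$ and that for every $(x,\xi)\in\WF(A)$ there is $T\ge 0$ with $\exp(-t\left<\xi\right>^{-1}H_p)(x,\xi)\in\ELL(G)$ for all $t\in[0,T]$ --- in particular $\WF(A)\subseteq\ELL(G)$ by the case $t=0$ --- and $\exp(-T\left<\xi\right>^{-1}H_p)(x,\xi)\in\ELL(B)$. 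The customary preliminary reductions then apply: one inserts a regularizer $\Lambda_\delta\in\Psi^{-1}_h(X)$, bounded in $\Psi^0_h(X)$ and tending to the identity as $\delta\downarrow 0$, to render all the pairings below finite, and one proceeds by induction, raising the Sobolev index in steps of $1/2$ starting from the a priori membership $\chi u\in H^{-N}_h(X)$; below I describe the inductive step that produces the estimate at level $s$.

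The heart of the matter --- and what I expect to be the main obstacle --- is the construction of the commutant. Write $\mathsf{H}=\left<\xi\right>^{1-k}H_p$ for the rescaled Hamilton field, which is tangent to $\partial\overline{T}^*X$. Since $\WF(A)$ is compact, $\WF(A)\subseteq\ELL(G)$, and $\ELL(B),\ELL(G)$ are open, flowing along $\mathsf{H}$ --- using compactness to patch over the base-point dependence of $T$ --- produces an open set $\mathcal{U}\subseteq\ELL(G)$ containing a slightly fattened backward $\mathsf{H}$-flowout of $\WF(A)$, together with a symbol $a\in S^{s+(1-k)/2}_h(T^*X)$ supported in $\mathcal{U}$ and elliptic on $\WF(A)$, such that
\[
-H_p(a^2)\ \ge\ c\,|b_1|^2\ -\ C\,|e|^2\qquad\text{on }\overline{T}^*X,
\]
where $c>0$, the symbol $b_1\in S^s_h(T^*X)$ is elliptic on $\WF(A)$ and more generally on the bulk of $\mathcal{U}$, and $e\in S^s_h(T^*X)$ is supported in $\ELL(B)$. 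Concretely $a^2$ is an exponential weight in the flow time (whose rate $c$ is at our disposal) times cutoffs in the flow and transverse directions, arranged so that the cutoff turning $a$ off near the $\ELL(B)$-end of the tube is the only source of the error $e$; fattening the tube slightly ahead of $\WF(A)$ (permissible since $\WF(A)\subseteq\ELL(G)$ is open) prevents any spurious contribution at the $\WF(A)$-end. The delicate points are to keep every cutoff inside the prescribed open sets, to carry out the construction uniformly up to $\partial\overline{T}^*X$ where $\mathsf{H}$ rather than $H_p$ governs the flow, and to patch the pieces arising from the base-point-dependent exit times.

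With $A_0=\Op^h(a)$ I would then use the commutator identity
\[
\frac{i}{h}\left\langle (A_0^*A_0P-P^*A_0^*A_0)\,u,\ u\right\rangle\ =\ \frac{2}{h}\,\Im\left\langle A_0^*A_0u,\ Pu\right\rangle,
\]
and expand the left side as $\tfrac{i}{h}[A_0^*A_0,\tfrac12(P+P^*)]$ plus $\tfrac{1}{h}(A_0^*A_0C+CA_0^*A_0)$, where $C:=\tfrac{i}{2}(P-P^*)$ is self-adjoint with $\sigma_h(C)=q$. The first term has principal symbol $-H_p(a^2)$, so the displayed inequality, the sharp G\aa rding inequality, and two applications of Proposition~\ref{prop:microelliptic} (invoking $b_1$ elliptic on $\WF(A)$ and $e$ supported in $\ELL(B)$) bound it below by $c'\|Au\|^2_{H^s_h}$, and more precisely by a positive multiple of the $H^s_h$-mass of $u$ microlocalized over the bulk of $\mathcal{U}$, minus $C'\|Bu\|^2_{H^s_h}$, modulo $\mathcal{O}(h^\infty)\|\chi u\|^2_{H^{-N}_h}$ and a remainder $\le C\|u\|^2_{H^{s-1/2}_h}$ microlocalized near $\mathcal{U}$ --- here $c'$ is as large as we wish. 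For the damping term, the hypothesis $q\ge 0$ on $\WF(G)\supseteq\WF(A_0)$ is exactly what makes the sharp G\aa rding inequality applicable, yielding $\tfrac{1}{h}\langle(A_0^*A_0C+CA_0^*A_0)u,u\rangle\ge -C''\|u\|^2_{H^s_h}$ microlocally near $\mathcal{U}$, whose bulk part is absorbed into the commutator term once $c'$ is large and whose part near the $\ELL(B)$-end is absorbed into $\|Bu\|^2_{H^s_h}$ by Proposition~\ref{prop:microelliptic}; without a sign on $q$ this contribution would instead carry a factor $h^{-1}$ and overwhelm the estimate --- this is the sole use of the hypothesis on $q$.

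It remains to close the estimate. On the right-hand side of the identity I would replace $Pu$ by $GPu$ modulo $\mathcal{O}(h^\infty)\|\chi u\|_{H^{-N}_h}$ (using $\WF(A_0)\subseteq\ELL(G)$), estimate $|\langle A_0^*A_0u,GPu\rangle|\le\|A_0^*A_0u\|_{H^{k-1-s}_h}\,\|GPu\|_{H^{s-k+1}_h}$, and absorb by Young's inequality --- the mismatch of orders is precisely what forces the factor $h^{-1}$ in front of $\|GPu\|_{H^{s-k+1}_h}$. The remainders $\|u\|^2_{H^{s-1/2}_h}$ microlocalized near $\mathcal{U}$ are controlled, by the inductive hypothesis at level $s-1/2$ (which applies over $\mathcal{U}$ with the same $B$ and $G$, by construction of $\mathcal{U}$), by the same quantities $h^{-2}\|GPu\|^2_{H^{s-k+1}_h}+\|Bu\|^2_{H^s_h}+\mathcal{O}(h^\infty)\|\chi u\|^2_{H^{-N}_h}$. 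Collecting terms, absorbing the small multiples of $\|Au\|^2_{H^s_h}$ on the left, and letting $\delta\downarrow 0$ then yields both the regularity assertion $Au\in H^s_h(X)$ and the stated bound. The commutator calculus and the tracking of $h$-powers and Sobolev orders in these last steps are routine; the real work is the commutant construction of the second paragraph.
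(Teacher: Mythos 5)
The paper itself gives no proof of Proposition \ref{prop:microprop}: it is quoted as a standard semiclassical propagation result, with the reader referred to \cite[Appendix E]{zworski:resonances} and \cite{vasy:2013}, and your outline is precisely the positive-commutator proof found there --- the reduction to one sign, the exponential-weight commutant supported in a flow tube from $\ELL(B)$ to $\WF(A)$ inside $\ELL(G)$, the sharp G\aa rding step for the $q$-term (correctly identified as the only use of the sign hypothesis), the Cauchy--Schwarz step responsible for the $h^{-1}$ loss on $\|GPu\|_{H^{s-k+1}_h}$, and the regularization and induction in the Sobolev index, with the orders and absorption steps consistent. The commutant construction you defer is the standard one carried out in those references, so there is no gap of substance in the approach, though as written your argument is a sketch rather than a self-contained proof.
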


\subsection{Parameter-dependent differential operators} \label{subsubsect:param} Recall the class of parameter-dependent differential operators on $X$: these are operators $P(\lambda)$ given in local coordinates by
\[
P(x,D_x,\lambda) = \sum_{j + |\alpha| \leq m} a_{j,\alpha}(x) \lambda^j D_x^\alpha,
\]
where $\lambda \in \mathbb{C}$ is a parameter; the order of $P(\lambda)$ is said to be at most $m$, and the set of all such operators is denoted $\mathrm{Diff}^m_{(\lambda)}(X)$. The parameter-dependent principal symbol of $P(\lambda)$ is given in coordinates by
\[
\sigma_{m}^{(\lambda)}(P(\lambda)) = \sum_{j+|\alpha| = m} a_{j,\alpha}(x) \lambda^j \xi^\alpha.
\]
This is a well-defined function on $T^*X \times \mathbb{C}_\lambda$, which is a homogeneous degree $m$ polynomial in the fibers. If $P(\lambda) \in \mathrm{Diff}^m_{(\lambda)}(X)$ has parameter-dependent principal symbol $p(\lambda) = p(x,\xi;\lambda)$ and $\Lambda \subseteq \mathbb{C}$ is an angular sector, then $P(\lambda)$ is said to be parameter-elliptic on an open subset $U \subseteq X$ with respect to $\Lambda$ if 
\[
p(x,\xi;\lambda) \neq 0,\quad (x,\xi,\lambda) \in (T^*_U X \times \Lambda) \setminus 0.
\]
Of course parameter-ellipticity with respect to any $\Lambda$ also implies ellipticity in the sense of Section \ref{subsect:microlocal}.


For the corresponding class of parameter-dependent pseudodifferential operators, see \cite[Section 9]{shubin:2001}. The closely related semiclassical calculus is treated in \cite[Chapter 6]{dimassi:1999}, \cite{zworski:2012}, and \cite[Appendix E]{zworski:resonances} for example.


%
%

\subsection{Lorentzian metrics} \label{subsubsect:lorenztian} 

Let $g$ denote a Lorentzian metric of signature $(1,n)$ on an $n+1$ dimensional manifold $\mathcal{M}$ with a complete Killing field $T$. Assume there exists a spacelike hypersurface $X \subseteq \mathcal{M}$ such that each integral curve of $T$ intersects $X$ exactly once. Then the parameter along the flow of $T$ defines a function $t: \mathcal{M} \rightarrow \mathbb{R}$ such that $X = \{ t= 0\}$. Moreover, the flow gives a diffeomorphism $\mathcal{M} = \mathbb{R}_t \times X$. In this product decomposition, $T = \partial_t$.

With respect to the splitting $T^*\mathcal{M} = \mathbb{R}\cdot dt \oplus T^*X$, the principal symbol of the wave operator $\Box_g$ (which does not depend on $t$) is given by
\[
\sigma_2(\Box_g)(x,\xi,\tau) = -g^{-1}(\xi \cdot dx + \tau \, dt, \xi \cdot dx + \tau \, dt),
\]
where $\tau\in \mathbb{R}$ is the momentum conjugate to $t$. Let $\widehat{\Box}_g(\lambda)$ denote the operator obtained from $\Box_g$ by replacing $D_t$ with $-\lambda$. Thus $\widehat{\Box}_g(\lambda)$ acts on $u \in C^\infty(X)$ by 
\[
\widehat{\Box}_g(\lambda) u = e^{i\lambda t}\Box_g e^{-i\lambda t }u,
\]
where $u$ is identified with a $T$-invariant function on $\mathcal{M}$.
This is a parameter-dependent differential operator of order two in the sense of Section \ref{subsubsect:param}, whose parameter-dependent principal symbol is just $p(x,\xi;\lambda) = \sigma_2(\Box_g)(x,\xi,-\lambda)$. In particular,
\begin{align} \label{eq:realimaginaryparts}
\Re p(x,\xi;\lambda) &= -g^{-1}(\xi \cdot dx -\Re \lambda\, dt, \xi \cdot dx - \Re \lambda \,dt) + (\Im \lambda)^2 g^{-1}(dt,dt), \notag \\
\Im p(x,\xi;\lambda) &= 2\,(\Im \lambda) g^{-1}(\xi \cdot dx - \Re \lambda \, dt, dt). 
\end{align}
The standard principal symbol of $P(\lambda)$ is $\sigma_2(P(\lambda))(x,\xi) = p(x,\xi;0)$, which in particular is real-valued and independent of $\lambda$.

\begin{lem} \label{lem:lorentzianmetrics} The operator $\widehat{\Box}_{g}(\lambda)$ has the following properties.
\begin{enumerate} \itemsep6pt
	
	\item If $\Im \lambda \neq 0$, then $p(x,\xi;\lambda) \neq 0$ for $\xi \in T^*_x X$.
	
	\item If $T$ is timelike at $x\in X$, then $p(x,\xi;0) \neq 0$ for $\xi \in T^*_x X \setminus 0$.
\end{enumerate}
\end{lem}
\begin{proof}
	\begin{inparaenum}[(1)]
		\item Recall that $g^{-1}(dt, dt) > 0$ since $X$ is spacelike. If $\Im p(\lambda) = 0$ and $\Im \lambda \neq 0$, then $\xi \cdot dx - \Re \lambda \, dt$ would be orthogonal to the timelike vector $dt$, hence spacelike. This means 
		\[
		g^{-1}(\xi \cdot dx - \Re \lambda \, dt,\xi \cdot dx - \Re \lambda\, dt) < 0,
		\] 
		which shows that $\Re p(\lambda) \neq 0$.
		
		\item Note that $\lambda= g^{-1}(T^{\,\flat},\xi \cdot dx - \lambda\,dt)$, where $T^{\,\flat}$ is the covector obtained from $T$ by lowering an index. If $\lambda = 0$ and $T$ is timelike, then $\xi \cdot dx$ is spacelike, so $p(x,\xi;0)$ is positive definite. 
	\end{inparaenum}
\end{proof}

	As a corollary of Lemma \ref{lem:lorentzianmetrics}, if $T$ is timelike at $x$ and $\Lambda$ is an angular sector disjoint from $\mathbb{R} \setminus 0$, then $\widehat{\Box}_g(\lambda)$ is parameter-elliptic near $x$ with respect to $\Lambda$.

\section{Local theory of Bessel operators}
  \label{subsect:bessel} This section reviews some facts about differential operators with inverse square singularities. General elliptic boundary value problems for this class of Bessel operators were recently studied in \cite{gannot:bessel}. Here only the local theory is reviewed, namely on coordinate patches. This is meant to acquaint the reader with the basic objects. In applications, the results of this section must be globalized via partition of unity arguments. This is briefly indicated in Section \ref{subsect:adsfunctionspaces}; for more details see \cite{gannot:bessel}.

\subsection{Basic definitions} \label{subsubsect:besseldefs}

Let $\RNP = \RN \times \RP$. A typical element $x \in \RNP$ is written $x = (x', x_n)$, where $x' = (x_1,\ldots,x_{n-1}) \in \RN$ and $x_n \in \RP$. The space $L^2(\RNP)$ of square integrable functions is defined with respect to Lebesgue measure.

For each $\nu \in \mathbb{R}$ the differential operators
\[
D_\nu = x_n^{1/2-\nu}D_{x_n} x_n^{\nu-1/2}, \quad D_\nu^* = x_{n}^{\nu-1/2} D_{x_n} x_n^{1/2-\nu}
\]
are well defined on $\RNP$. Note that  $D_\nu^*$ is indeed the formal $L^2(\RNP)$ adjoint of $D_\nu$. Formally define
\[
|D_\nu|^2 = D_{x_n}^2 + (\nu^2 - 1/4)x_n^{-2},
\]
which satisfies $|D_\nu|^2 = D_\nu^* D_\nu$. 

Now assume that $\nu > 0$, and consider a parameter-dependent operator $P(\lambda)$ on $\RNP$ of the form
\begin{equation} \label{eq:besseloperator}
P(x,D_\nu,D_{x'};\lambda) = |D_\nu|^2 + B(x,D_{x'};\lambda) D_\nu + A(x,D_{x'};\lambda),
\end{equation}
where $A(\lambda), B(\lambda)$ are parameter-dependent operators on $\OL{\RNP}$ of order two, one respectively, such that the coefficients of $B(\lambda)$ vanishes at $x_n = 0$. Such an operator will be referred to as a \emph{parameter-dependent Bessel operator of order $\nu$}. It is easy to check that the formal adjoint $P(\lambda)^*$ satisfies the same conditions as $P(\lambda)$.

\subsection{Ellipticity}

If $A(\lambda)$ is defined as in \eqref{eq:besseloperator}, let $A(\lambda)^\circ$ denote its principal part:
\[
A(x,D_{x'};\lambda)^\circ = \sum_{j+|\alpha| = 2} a_{\alpha,j}(x) \lambda^j D_{x'}^\alpha.
\]
Thus $A(x',0,\eta;\lambda)^\circ$ is a polynomial of degree two in $(\eta,\lambda) \in T_{x'}^* \RN \times \mathbb{C}$. Associated with $P(\lambda)$ is the polynomial function
\begin{equation} \label{eq:besselpolynomial}
\zeta^2 + A(x',0,\eta;\lambda)^\circ,
\end{equation}
indexed by points $x' \in \RN$. If $\Lambda \subseteq \mathbb{C}$ is an angular sector, then $P(\lambda)$ is said to be parameter-elliptic with respect to $\Lambda$ at a boundary point $x' \in \RN$ if \eqref{eq:besselpolynomial} does not vanish for $(\zeta,\eta,\lambda) \in ( \mathbb{R} \times T^*_{x'} \RN \times \Lambda ) \setminus 0$. Ellipticity at the boundary (in the standard, non-parameter-dependent sense) is defined by the condition that \eqref{eq:besselpolynomial} evaluated at $\lambda = 0$ does not vanish for $(\zeta,\eta) \in  ( \mathbb{R} \times T^*_{x'} \RN ) \setminus 0$.

If $P(\lambda)$ is parameter-elliptic at the boundary, then \eqref{eq:besselpolynomial} (as a function of $\zeta$) has two non-real roots $\pm \zeta(x',\eta;\lambda)$ for $(\eta,\lambda) \in (T^*_{x'} \RN \times \Lambda) \setminus 0$. By convention $\Im \zeta(x',\eta;\lambda) < 0$. Any solution to the ordinary differential equation
\begin{equation} \label{eq:modelODE}
\left( |D_\nu|^2 + A(x',0,\eta;\lambda)^\circ \right) u = 0
\end{equation}
is a linear combination of Bessel functions
\[
u = c_+ x_n^{1/2}I_\nu(i\zeta(x',\eta;\lambda)x_n) + c_- x_n^{1/2}K_\nu(i\zeta(x',\eta;\lambda)x_n).
\]
Requiring that $u$ is square integrable on $\mathbb{R}_+$ near infinity with respect to ordinary Lebesgue measure implies that $c_+ = 0$; this follows from the asymptotics of Bessel functions \cite[Chapter 7.8]{olver:2014}. Furthermore, if $\nu \geq 1$, then square integrability near $x_n = 0$ implies also $c_- = 0$. If $0 < \nu < 1$, then the space of solutions to \eqref{eq:modelODE} is one dimensional, and boundary conditions must be imposed along $\RN$.

\subsection{Boundary operators} \label{subsubsect:boundaryoperators}

When $0 < \nu < 1$, one needs to impose boundary conditions to prove coercive estimates. Formally define the weighted restrictions 
\[
\gamma_- u = x_n^{\nu-1/2} u|_{\mathbb{R}^{n-1}}, \quad \gamma_+ u = -x_n^{1-2\nu} \partial_{x_n}(x_n^{\nu-1/2} u)|_{\mathbb{R}^{n-1}}.
\]
The boundary operator $T(x',D_{x'};\lambda)$ is written as
\[
T(\lambda) = T_1 + \lambda T_0
\]
for $T_0,T_1$ of the following forms:
\[
T_1 = T_1^+ \gamma_+ + T_1^- \gamma_-, \quad T_0 = T_0^- \gamma_-,
\]
where $T_1^+, T_0^-$ are smooth functions on $\RN$ and $T_1^-$ is a first order operator on $\RN$. 

Depending on the value of $\nu$, different terms should be considered as the ``principal part'' of $T(\lambda)$. Fix the smallest $\mu \in \{1- \nu ,2-\nu,1+\nu\}$ such that the orders of $T_1^- + \lambda T_0^-$ and $T_1^+$ do not exceed $\mu- 1 + \nu$ and $\mu-1-\nu$, respectively. Here order is taken in the sense of parameter-dependent differential operators on the boundary. Given $\mu$ as above, define $T(\lambda)^\circ = T(x',D_{x'};\lambda)^\circ$ to be the boundary operator which for each $(x',\eta) \in T^*\RN$ satisfies
\[
T(x',\eta;\lambda)^\circ = \sigma^{(\lambda)}_{\lceil \mu - 1 + \nu\rceil}(T_1^- + \lambda T^-_0) \gamma_- +  \sigma^{(\lambda)}_{\lceil \mu - 1 - \nu\rceil}(T_1^+)\gamma_+.
\]
This is the principal part of $T(\lambda)$ --- see \cite[Section 4]{gannot:bessel} for more details.

\subsection{Lopatinski\v{\i} condition} \label{subsubsect:lopatinskii}
 Let $0 < \nu < 1$ and suppose that $T(\lambda)$ is a boundary operator with principal part $T(\lambda)^\circ$ as in Section \ref{subsubsect:boundaryoperators}. If $P(\lambda)$ is parameter-elliptic at the boundary with respect to $\Lambda$, then $T(\lambda)$ is said to satisfy the Lopatinski{\v{\i}} condition with respect to $P(\lambda)$ if for each $(x',\eta,\lambda) \in  \left( T^* \RN \times \Lambda \right) \setminus 0$ the only solution to the problem
\[
\begin{cases}
\left( |D_\nu|^2 + A(x',0,\eta;\lambda)^\circ \right) u = 0,\\
T(x',\eta; \lambda )^\circ u = 0,\\
u(x_n) \text{ is bounded as $x_n \rightarrow \infty$}
\end{cases}
\]
is the trivial solution $u = 0$. In that case, the operator 
\[
\mathscr{P}(\lambda) = \begin{pmatrix}
P(\lambda) \\ T(\lambda)
\end{pmatrix}
\]
is said to be parameter-elliptic at the boundary with respect to $\Lambda$.  Similarly, the Lopatinski{\v \i} condition and ellipticity at $Y$ (in the standard, non-parameter-dependent sense) are defined by taking $\lambda =0$ above. The basic consequences of ellipticity in this sense (on a compact manifold with boundary) are proved in \cite[Section 4]{gannot:bessel}, and exploited in Section \ref{subsect:adsellipticestimate} of this paper.

\section{Kerr--AdS spacetime} \label{sect:kerradsmetric}

The Kerr--AdS metric is determined by three parameters: \begin{inparaenum}[(i)] \item $\Lambda < 0$, the negative cosmological constant, \item  $M > 0$, the black hole mass, \item $a \in \mathbb{R}$, the angular momentum per unit mass. \end{inparaenum} Given parameters $(\Lambda,M, a)$, let $l^2 = 3/|\Lambda|$ and introduce the quantities 
\begin{gather*}
\Delta_r = (r^2+a^2)\left(1+ \frac{r^2}{l^2}\right) - 2M r;  \quad 
\Delta_\theta = 1- \frac{a^2}{l^2}\cos^2\theta; \\
\varrho^2 = r^2 + a^2 \cos^2 \theta; \quad \alpha = \frac{a^2}{l^2}.
\end{gather*}
The following observation concerns the location of roots of $\Delta_r$.

\begin{lem} \label{lem:roots}
	Any real root of $\Delta_r$ must be nonnegative, and there at most two real roots. If $a = 0$, then $\Delta_r$ always has a unique positive root.
\end{lem}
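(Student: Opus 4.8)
The plan is to study the polynomial $\Delta_r$ directly, treating it as a real function of $r$ and exploiting the sign structure of its coefficients. First I would expand
\[
\Delta_r = \frac{r^4}{l^2} + \left(1 + \frac{a^2}{l^2}\right) r^2 - 2M r + a^2,
\]
which exhibits $\Delta_r$ as a quartic with positive leading coefficient. To see that no negative root exists, observe that for $r \leq 0$ every term $\tfrac{r^4}{l^2}$, $(1+\tfrac{a^2}{l^2})r^2$, $-2Mr$, and $a^2$ is nonnegative (using $M>0$ and $r\le 0$ for the linear term), and at least one is strictly positive unless $r=0$ and $a=0$; hence $\Delta_r > 0$ for $r<0$, ruling out negative roots. (One should also note $r=0$ is a root only when $a=0$, consistent with the final claim.)

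Next, to bound the number of real roots I would examine $\partial_r \Delta_r = \tfrac{4r^3}{l^2} + 2(1+\tfrac{a^2}{l^2})r - 2M$. On $r \geq 0$ this derivative is strictly increasing (its own derivative $\tfrac{12 r^2}{l^2} + 2(1+\tfrac{a^2}{l^2})$ is positive), so $\partial_r\Delta_r$ has exactly one zero on $[0,\infty)$, say at $r = r_c$; thus $\Delta_r$ is strictly decreasing on $[0,r_c)$ and strictly increasing on $(r_c,\infty)$. Since $\Delta_r$ is therefore unimodal on $[0,\infty)$ with $\Delta_r \to +\infty$ as $r\to\infty$, it has at most two roots there, and combined with the previous paragraph this gives at most two real roots overall.

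Finally, for the case $a = 0$: then $\Delta_r = r^2(1 + \tfrac{r^2}{l^2}) - 2Mr = r\big(r + \tfrac{r^3}{l^2} - 2M\big)$, so the positive roots are exactly the positive roots of $f(r) := r + \tfrac{r^3}{l^2} - 2M$. Since $f$ is strictly increasing on $[0,\infty)$ with $f(0) = -2M < 0$ and $f(r)\to+\infty$, it has a unique positive root, which gives the claimed unique positive root of $\Delta_r$.

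I do not expect any serious obstacle here: the argument is entirely elementary, relying only on sign inspection of coefficients and a single monotonicity computation for $\partial_r\Delta_r$ on $[0,\infty)$. The only mild subtlety is bookkeeping the degenerate role of $r=0$ (a root precisely when $a=0$) so that the statement "any real root must be nonnegative" is read correctly and the $a=0$ conclusion is stated as a unique \emph{positive} root.
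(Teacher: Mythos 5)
Your proof is correct and follows essentially the same elementary route as the paper: the bound of at most two real roots comes from the positivity of $\partial_r^2\Delta_r$ (your strict monotonicity of $\partial_r\Delta_r$ on $[0,\infty)$ is just convexity restricted to the half-line), while sign considerations at $r=0$ handle nonnegativity of roots and the $a=0$ case. Your factorization $\Delta_r = r\bigl(r + \tfrac{r^3}{l^2} - 2M\bigr)$ when $a=0$, reducing the claim to the strictly increasing function $r + \tfrac{r^3}{l^2} - 2M$, is a slightly more explicit justification of the unique positive root than the paper's brief remark.
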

\begin{proof}
	\begin{inparaenum}[(1)] \item When $a=0$ it is clear that $\Delta_r$ has a unique positive root, and furthermore $\Delta_r'(r) > 0$ for $r > 0$. 
	
	\item On the other hand, if $a\neq 0$ then $\Delta_r(0) > 0$ and $\Delta_r'(0) < 0$. At the same time, $\Delta_r(r) \rightarrow \infty$. Since $\Delta_r'' > 0$, when $a\neq 0$ any real root of $\Delta_r$ must be positive, and there are at most two real roots. \end{inparaenum}
\end{proof}

\noindent Let $r_+$ denote the largest positive root of $\Delta_r$, when it exists. Throughout, it is assumed that\begin{inparaenum}[(i)]\item $r_+$ exists and $ \Delta_r'(r_+) > 0$, \item the rotation speed satisfies the regularity condition $|a| < l$ \end{inparaenum}.

 The Kerr--AdS metric determined by $(\Lambda,M,a)$ is initially defined on 
 \[
\mathcal{M}_0 = \mathbb{R} \times (r_+,\infty) \times \mathbb{S}^2.
\] 
Let $t$ and $r$ denote standard coordinates on $\mathbb{R}$ and $(r_+,\infty)$ respectively. Away from the north and south poles of $\mathbb{S}^2$, let $(\theta, {\phi})$ denote usual spherical coordinates. Thus $\theta \in (0,\pi)$ and $\phi \in \mathbb{R}/(2\pi \mathbb{Z})$, where these coordinates degenerate as $\theta$ tends to either $0$ or $\pi$. In terms of Boyer--Lindquist coordinates $({t},r,\theta,{\phi})$, the metric $g$ is given by
\begin{align*}
g  =& -\varrho^2 \left( \frac{d r^2}{\Delta_r} + \frac{d\theta^2}{\Delta_\theta} \right) - \frac{\Delta_\theta \sin^2 \theta}{\varrho^2(1-\alpha)^2 }\left(a\, dt - (r^2 + a^2) \, d{ \phi}\right)^2 \\&+ \frac{\Delta_r}{\varrho^2(1-\alpha)^2 }\left(d t - a\sin^2\theta \, d{\phi}\right)^2.
\end{align*}
Introducing Cartesian coordinates near the north and south poles of $\mathbb{S}^2$ shows that $g$ extends smoothly to those coordinate singularities. The dual metric $g^{-1}$ is given by
\begin{align} \label{eq:dualmetric}
g^{-1} = & \ -\frac{\Delta_r}{\varrho^2} \partial_{r}^2 - \frac{\Delta_{\theta}}{\varrho^2} \partial_{\theta}^2 - \frac{(1-\alpha)^2}{\varrho^2 \Delta_\theta \sin^2  \theta} \left( a \sin^2 \theta \partial_{{t}} + \partial_{{\phi}}\right)^2 \notag\\&+ \frac{(1-\alpha)^2}{\varrho^2\Delta_r}\left((r^2+a^2)\partial_{{t}} + a \partial_{{\phi}}\right)^2.
\end{align}
Also observe that the scaling transformations 
	\[
	l \mapsto \sigma l,\quad a \mapsto \sigma a,\quad  M \mapsto \sigma M,\quad r \mapsto \sigma r,\quad t \mapsto \sigma t
	\] 
	induce a conformal transformation $g \mapsto \sigma^2 g$. By setting $\sigma = l^{-1}$, it is assumed for the remainder of the paper that $l = 1$, or equivalently $|\Lambda| = 3$.

\subsection{Kerr--AdS as an asymptotically anti-de Sitter spacetime} \label{subsect:kerrasaads}

To analyze the behavior of $g$ for large $r$, introduce a new radial coordinate $s=r^{-1}$. Let
\[
\mathcal{I} = \{s = 0 \},
\]
which may be glued to $\mathcal{M}_0$ as a boundary component. In particular, $s$ is a global boundary defining function for $\scri$. Noting that 
\begin{equation} \label{eq:evenasymptotics}
\varrho^2 = s^{-2} + \mathcal{O}(1), \quad \Delta_r = s^{-4} + \mathcal{O}(s^{-2}),
\end{equation}
it follows that $s^2 g$ has a smooth extension to $\mathcal{M}_0 \cup \mathcal{I}$, which justifies calling $\scri$ a conformal boundary for $\mathcal{M}_0$. Moreover, $s^{-2}g^{-1}(ds,ds) \rightarrow -1$ as $s \rightarrow 0$, which shows that the restriction of $s^2 g$ to $T \scri$ is a Lorentzian metric on $\scri$. Thus $g$ has an asymptotically anti-de Sitter end at $r \rightarrow \infty$ in the sense of \cite{friedrich1995einstein}.

\subsection{Extension across the event horizon} \label{subsect:extension} As usual, $g$ appears singular at the event horizon 
\[
\mathcal{H}^+ = \{ r = r_+ \} = \{ \Delta_r = 0\}.
\]
The metric may be extended smoothly across this hypersurface by making an appropriate change of variables. Set
\begin{equation} \label{eq:kerrstar}
t^\star = t + F_t(r); \quad \phi^\star =  \phi + F_\phi(r),
\end{equation}
where $F_t,\, F_\phi$ are smooth functions on $(r_+,\infty)$ satisfying the following conditions:
\begin{enumerate} \itemsep6pt
	\item For some smooth function $f_+(r)$,
	\begin{equation} \label{eq:changeofcoords}
F_t'(r) = \frac{1-\alpha}{\Delta_r}(r^2+a^2) + f_+(r), \quad  F_\phi'(r) = a\frac{1-\alpha}{\Delta_r}
\end{equation}
near $r_+$,
\item $F_t(r) = F_\phi(r) = 0$ for $r$ sufficiently large.
\end{enumerate}
In the region where \eqref{eq:changeofcoords} is valid, the dual metric in $(t^\star,r,\theta,\phi^\star)$ coordinates reads
\begin{align} \label{eq:extendedmetric}
\varrho^2 g^{-1} = &-\Delta_r 
\left( \partial_r + f_+ \partial_{t^\star} \right)^2 - \Delta_\theta \partial_\theta^2 - 2(1-\alpha) \left( \partial_r + f_+ \partial_{t^\star} \right)\left((r^2+a^2)\partial_{t^\star}  +a \partial_{\phi^\star} \right) \notag \\ &- \frac{(1-\alpha)^2}{ \Delta_\theta \sin^2 \theta} \left( a \sin^2 \theta \partial_{t^\star} +  \partial_{{\phi^\star}}\right)^2.
\end{align}
This expression is smooth up to $\mathcal{H}^+$. In fact, given $\delta >0$ sufficiently small, let
\begin{equation} \label{eq:foliation}
\mathcal{M}_\delta = \mathbb{R} \times (r_+ -\delta,\infty)\times \mathbb{S}^2.
\end{equation}
If $t^\star$ is the coordinate on $\mathbb{R}$, then \eqref{eq:extendedmetric} defines a dual Lorentzian metric on $\mathcal{M}_\delta$. Geometrically, $\mathcal{M}_\delta$ is foliated by translations of $\{ t^\star = 0\}$ along integral curves of $\partial_{t^\star}$, which gives the product decomposition \eqref{eq:foliation}. Choices of $F_t$ correspond to foliations of $\mathcal{M}_\delta$ by different initial hypersurfaces.

Let $X_\delta = \{t^\star = 0\} \subset \mathcal{M}_\delta$. For the purposes of this paper, $t^\star$ must be chosen so that $X_\delta$ is spacelike, or equivalently $g^{-1}(dt^\star,dt^\star) > 0$. To accomplish this, choose $F_t$ such that $F_t'$ satisfies \eqref{eq:changeofcoords} globally, where 
\begin{equation} \label{eq:f_+larger}
f_+(r) = \frac{\alpha-1}{\Delta_r}(r^2+a^2)
\end{equation}
for $r$ sufficiently large, and
\begin{equation}  \label{eq:dttimelike}
\Delta_r f_+^2 + 2(1-\alpha)(r^2+a^2)f_+ < - (1-\alpha)^2 a^2.
\end{equation}
Since $|a|< 1$, any function $f_+(r) \sim (\alpha-1)r^{-2}$ satisfies \eqref{eq:dttimelike} for $r$ sufficiently large. Interpolating between $f_+(r) = (\alpha-1)(1+r^2)^{-1}$ near $r_+$ and \eqref{eq:f_+larger} for large $r$ finishes the construction.

\subsection{Surface gravity} The hypersurface $\mathcal{H}^+$ is a Killing horizon generated by the future-pointing Killing vector field
\begin{equation} \label{eq:generator}
K = \partial_{t^\star} + \frac{a}{r_+^2 + a^2}\partial_{\phi^\star}.
\end{equation}
This means that $\mathcal{H}^+$ is a $K$-invariant null hypersurface and $K$ is normal to $\mathcal{H}^+$. These conditions imply that
\begin{equation} \label{eq:surfacegravityeqn}
\grad_g g(K,K) = -2\varkappa K
\end{equation}
on $\mathcal{H}^+$ for some function $\varkappa$. Examining the $\partial_{t^\star}$ component of \eqref{eq:surfacegravityeqn} on the horizon gives the (constant) value
\begin{equation} \label{eq:surfacegravity}
\varkappa = \frac{\Delta_r'(r_+)}{2(1-\alpha)(r_+^2+a^2)},
\end{equation} 
which is positive under the assumption that $\Delta_r'(r_+) > 0$.

\subsection{The manifold with boundary} \label{subsect:manifoldwithboundary}
As indicated in Section \ref{subsect:mainresults}, $\mathcal{M}_\delta$ is profitably viewed as the interior of the manifold
\[
\OL{\mathcal{M}}_\delta = \mathcal{M}_\delta \cup \mathcal{H}_\delta \cup \scri,
\]
where $\mathcal{H}_\delta = \{ r = r_+ - \delta\}$ and $\scri = \{ s = 0\}$. The metric $g$ is smooth up to $\mathcal{H}_\delta$, and $s^2 g $ is smooth up to $\scri$. Observe that $\mathcal{H}^+ = \mathcal{H}_0$, and if $\delta > 0$, then $dr$ is timelike in the region bounded by $\mathcal{H}^+$ and $\mathcal{H}_\delta$.

In terms of the time slicing, $t^\star$ extends to a function on $\OL{\mathcal{M}}_\delta$, and the level set $\OL{X}_\delta = \{ t^\star = 0 \} \subseteq \OL{\mathcal{M}}_\delta$ is compact and spacelike with respect to $s^2 g$. The interior of $\OL{X}_\delta$ is identified with $X_\delta$, and $\partial X_\delta = H_\delta \cup Y$, where
$H_\delta = \mathcal{H}_\delta \cap \OL{X}_\delta$ and $Y = \scri \cap \OL{X}_\delta$.

\subsection{Klein--Gordon equation} \label{subsect:kleingordon}

The main object of study is the Klein--Gordon equation
\begin{equation} \label{eq:kleingordon}
\left( \Box_g + \nu^2 - 9/4 \right) \phi = 0.
\end{equation}
The mass term is written as $\nu^2 - 9/4$ to emphasize the importance of the parameter $\nu$, which is required to be strictly positive. By choosing an extension $\mathcal{M}_\delta$ of $\mathcal{M}_0$ according to Section \ref{subsect:extension}, the Klein--Gordon equation \eqref{eq:kleingordon} continues to make sense on $\mathcal{M}_\delta$.

Since this paper is ultimately concerned with quasinormal modes (which solve the homogeneous equation \eqref{eq:kleingordon}), it is more convenient to work with the operator $P(\lambda)$ given by
\[
P(\lambda) = \varrho^2 \big(\widehat\Box_g(\lambda) +\nu^2 - 9/4\big),  
\]
where $\widehat{\Box}_g(\lambda)$ is defined in Section \ref{subsubsect:lorenztian}. Up to a multiplicative factor, this is the spectral family of the Klein--Gordon equation \eqref{eq:kleingordon} acting on $X_\delta$. Multiplication by a positive prefactor growing like $r^2$ ensures that $P(\lambda)$ will be a Fredholm operator between $L^2$ based spaces with the same $r$-weights. The particular choice $\varrho^2 \sim r^2$ simplifies some formulae.

If $dS_t$ is the measure induced on $X_\delta$ by the metric, let $\mathcal{L}^2(X_\delta)$ denote square integrable functions with respect to $\varrho^{-2} A \cdot dS_t$, where
\[
 A = g^{-1}(dt^\star,dt^\star)^{-1/2}.
\]
Then the formal adjoint of $P(\lambda)$ satisfies 
\[
P(\lambda)^* = P(\bar \lambda).
\]
This follows from the relationship $|\det g| = A^2 \, |\det h|$, where $h$ is the induced metric on $X_\delta$, and the self-adjointness of $\Box_g$ with respect to the volume form on $\mathcal{M}_\delta$. Observe that $\Sob^0(X_\delta) = \mathcal{L}^2(X_\delta)$ is equivalent as a Hilbert space to the one defined in Section \ref{subsect:mainresults}, see \eqref{eq:L2space} in particular. It is precisely this space for which finite energy solutions to \eqref{eq:kleingordon} are square integrable.

\section{Microlocal study of $P(\lambda)$} \label{sect:microlocalstudy}

The purpose of this section is to understand the microlocal structure of $P(\lambda)$. Unless otherwise stated, all the analysis take place on the extended time slice $X_\delta$ with $\delta > 0$ fixed (the only exceptions are Lemmas \ref{lem:qnm_def:invertible1}, \ref{lem:qnm_def:invertible2}, where $\delta = 0$ is allowed). Let $p = \sigma_2(P(\lambda))$ denote the homogeneous principal symbol of $P(\lambda)$, which observe is real-valued and independent of $\lambda$. Explicitly,
\begin{equation} \label{eq:nonsemiclassicalsymbol}
p(x,\xi) = \Delta_r {\xi}_r^2 + 2 a (1-\alpha) {\xi}_r {\xi}_{\phi^\star} + \Delta_\theta {\xi_\theta}^2 + \frac{(1-\alpha)^2}{\Delta_\theta \sin^2\theta} {\xi}_{\phi^\star}^2,
\end{equation}
where $(\xi_r,\xi_\theta,\xi_{\phi^\star})$ are momenta dual to $(r,\theta,\phi^\star)$.

\subsection{Characteristic set}

Let $\Sigma = \{ p = 0\} \setminus 0$ denote the characteristic set of $P(\lambda)$. Its image in $S^*X_\delta$ is denoted by
\[
\widehat{\Sigma} = \kappa(\{ p = 0\} \setminus 0) \subseteq S^*X_\delta.
\]
Observe that $\xi_r \neq 0$ on $\Sigma$, since from \eqref{eq:nonsemiclassicalsymbol} the conditions $p = 0$ and $\xi_r = 0$ force $\xi =0$. Therefore $\Sigma$ is the disjoint union 
\[
\Sigma = \Sigma_+ \cup \Sigma_-, \quad \Sigma_\pm = \Sigma \cap \{ \pm \xi_r > 0\}.
\]
Similarly, $\widehat{\Sigma} = \widehat{\Sigma}_+ \cup \widehat{\Sigma}_-$, where $\widehat{\Sigma}_\pm =\kappa(\Sigma_\pm)$. Furthermore $\widehat{\Sigma}$ does not intersect the region where $\partial_{t^\star}$ is timelike by Lemma \ref{lem:lorentzianmetrics}. For $r> r_+$, this condition can be checked in Boyer--Lindquist coordinates, observing that $\partial_{t^\star} = \partial_t$ and the map $(t,r,\theta,\phi) \mapsto (t^\star,r,\theta,\phi^\star)$ does not affect the $r$ variable: the vector field $\partial_t$ is timelike provided
 \[
\Delta_r > a^2 \Delta_\theta \sin^2 \theta.
\]  
In particular, $\widehat{\Sigma}  \subseteq \{ \Delta_r \leq a^2 \}$.

\subsection{Null-bicharacteristic flow} \label{subsect:sourcesink}
The analysis in this section closely follows \cite[Section 6.3]{vasy:2013}, which applies to the Kerr-de Sitter family of metrics. Let 
\[
N^*(\{ r =r_+\}) \setminus 0 \subseteq T^*X_\delta \setminus 0
\]
denote the conormal bundle to $\{ r= r_+\} \subseteq X_\delta$, less the zero section. Since $\xi_r \neq 0$ on $N^*(\{ r =r_+\}) \setminus 0$, there is a splitting 
\[
N^*(\{ r =r_+\}) \setminus 0 = \mathcal{R}_+ \cup \mathcal{R}_-,
\] 
where
\[
\mathcal{R}_{\pm} = \{ r = r_+, \ \xi_\theta = \xi_{\phi^\star} = 0, \ \pm \xi_r >  0\} \subset T^*X_\delta \setminus 0.
\]
Let $L_\pm$ denote the image of the conic set $\mathcal{R}_\pm$ in $S^*X_\delta$, noting that 
\[
L_\pm \subset \widehat{\Sigma}_\pm.
\]
The crucial observation of \cite{vasy:2013} is that $L_+$ is a source and $L_-$ a sink for the rescaled Hamilton flow on $\widehat{\Sigma}_\pm$ generated by $|\xi|^{-1}H_p$ (here $|\cdot|$ is some norm on the fibers of $T^*X$). In fact, let 
\[
\rho = |\xi_r|^{-1},
\]
which is a homogeneous degree $-1$ function defined near $\widehat{\Sigma}$. Then $H_{p} \rho$ is homogeneous of degree zero, hence a function on $S^*X_\delta$. A brief calculation gives
\[
 H_{p} \rho |_{\widehat{\Sigma}_\pm} = \pm \Delta_r'(r).
\]
Furthermore, if 
\[
p_1 = \Delta_\theta {\xi_\theta}^2 + \frac{(1-\alpha)^2}{\Delta_\theta \sin^2\theta} {\xi}_{\phi^\star}^2,
\] 
then $H_{p} p_1 = 0$. Indeed, $p_1$ is the well known Carter constant \cite{carter1968hamilton} (with the momentum dual to $t^\star$, also conserved under the geodesic flow, set to zero). Therefore
\begin{equation} \label{eq:qnm_def:escape}
 \rho H_{p} (\rho^2 p_1) |_{\widehat{\Sigma}_\pm} = \pm 2 \Delta_r'(r)\rho^2 p_1.
\end{equation}
Finally, observe that the (quadratic, nondegenerate) vanishing of $\rho_1 = \rho^2 p_1$ within $\widehat{\Sigma}_\pm$ defines $L_\pm$.

\begin{lem}  \label{lem:sourcesink}
	There exists a neighborhood $U_\pm$ of $L_\pm$ in $\widehat{\Sigma}_\pm$ such that for each $(x,\xi) \in U_\pm$,
	\[
	\exp(\mp t \rho H_{p})(x,\xi) \rightarrow L_\pm
	\]
	as $t \rightarrow \infty$.
\end{lem}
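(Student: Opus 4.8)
The plan is to derive the statement from the quantitative estimate \eqref{eq:quantitativesourcesink} by a Gronwall argument, using $\rho_1$ as a defining function for $L_\pm$ that vanishes to exactly second order there. First I would fix a neighborhood $\mathcal{U}_\pm$ of $L_\pm$ in $\overline{T}^*X$ on which the projective coordinates $(\rho,\hat\xi_\theta,\hat\xi_\phi)$ together with $(\Delta_r,\theta,\phi)$ are valid and on which $\partial_r\Delta_r \geq c_0 > 0$; the latter is possible because $L_\pm$ lies over $\{\Delta_r = 0\}$, i.e.\ over $r = r_+$, where $\partial_r\Delta_r(r_+) > 0$ by the nonextremality assumption. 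On $\mathcal{U}_\pm$ one has $L_\pm = \{\rho = \hat\xi_\theta = \hat\xi_\phi = \Delta_r = 0\}$, and since $\Delta_r^2 + \hat\xi_\theta^2 + \hat\xi_\phi^2$ is comparable to $\rho^2(p_1+p_2)$ near $L_\pm$, the function $\rho_1 = \rho^2(1+p_1+p_2)$ is comparable to $\rho^2 + \Delta_r^2 + \hat\xi_\theta^2 + \hat\xi_\phi^2$ there, i.e.\ to the square of a distance function $d(\cdot,L_\pm)$. I would also record that $|\xi_r|/\langle\xi\rangle$ lies between two positive constants on $\mathcal{U}_\pm$, since a neighborhood of $L_\pm$ forces $|\xi_r|$ to be large, and hence $|\xi_r|$, $|\xi|$, and $\langle\xi\rangle$ are all comparable nearby.

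Next I would convert \eqref{eq:quantitativesourcesink} into a differential inequality for the generator of the flow in the statement. Writing $\mp\langle\xi\rangle^{-1}H_p = -\tfrac{|\xi_r|}{\langle\xi\rangle}\bigl(\pm|\xi_r|^{-1}H_p\bigr)$, and shrinking $\mathcal{U}_\pm$ enough that $\rho_1$ is small there so the $\mathcal{O}(\rho_1^{3/2})$ term is absorbed into half of $2c_0\rho_1$, estimate \eqref{eq:quantitativesourcesink} yields
\[
\bigl(\mp\langle\xi\rangle^{-1}H_p\bigr)\rho_1 \;\leq\; -2\beta\,\rho_1 \qquad \text{on } \mathcal{U}_\pm
\]
for some $\beta > 0$ depending on $c_0$ and the lower bound for $|\xi_r|/\langle\xi\rangle$. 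In particular $\rho_1$ is nonincreasing along the flow $\exp(\mp t\langle\xi\rangle^{-1}H_p)$ as long as the trajectory remains in $\mathcal{U}_\pm$.

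The only real subtlety is to guarantee that a forward trajectory starting close to $L_\pm$ never leaves $\mathcal{U}_\pm$, so that the differential inequality keeps applying; this is where compactness of $L_\pm$ enters. I would pick $\delta > 0$ so small that $\{d(\cdot,L_\pm) \leq \delta\}$ is a compact subset of $\mathcal{U}_\pm$, and then set $U_\pm = \{d(\cdot,L_\pm) < \delta'\}$ with $\delta' < \delta$ small enough that $\sup_{U_\pm}\rho_1 < \inf_{\{d = \delta\}}\rho_1$ (possible since $\rho_1$ vanishes to exactly second order at $L_\pm$). For $(x,\xi) \in U_\pm$, an open--closed argument on the set of $t \geq 0$ for which $\gamma(t) = \exp(\mp t\langle\xi\rangle^{-1}H_p)(x,\xi)$ is defined and stays in $\{d < \delta\}$ then shows, using that $\rho_1(\gamma(t))$ is nonincreasing and starts below $\inf_{\{d=\delta\}}\rho_1$ (which prevents escape) together with compactness of $\{d\le\delta\}$ (which prevents blow-up), that $\gamma(t)$ remains in $\{d < \delta\} \subseteq \mathcal{U}_\pm$ for all $t \geq 0$. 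Finally, Gronwall's inequality applied to $f(t) = \rho_1(\gamma(t))$ gives $f(t) \leq f(0)\,e^{-2\beta t}$, so that $d(\gamma(t),L_\pm) \leq C f(t)^{1/2} \to 0$ as $t \to \infty$, which is the claim. Everything except the trapping step reduces to a one-variable Gronwall estimate once \eqref{eq:quantitativesourcesink} is in hand, so the trapping argument is the main obstacle.
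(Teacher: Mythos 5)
Your proposal is correct and takes essentially the same approach as the paper: the paper's proof simply observes that $\rho_1$ vanishes precisely on $L_\pm$ near $L_\pm$ and that \eqref{eq:quantitativesourcesink} forces nearby flow lines to converge to $L_\pm$, i.e.\ it uses $\rho_1$ as a Lyapunov function exactly as you do. Your write-up merely supplies the details the paper leaves implicit (comparability of $\rho_1$ with the squared distance to $L_\pm$, absorption of the $\mathcal{O}(\rho_1^{3/2})$ error, the trapping argument via sublevel sets, and Gronwall).
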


\begin{proof} As noted above, the restriction of $\rho_1$ to $\widehat{\Sigma}_\pm$ vanishes precisely on $L_\pm$. It follows from \eqref{eq:qnm_def:escape} that flow lines of $\rho H_{p}$ in a small neighborhood of $L_\pm$ within $\widehat{\Sigma}_\pm$ converge to $L_\pm$ as $\mp t \rightarrow \infty$, since $\Delta_r'(r) > 0$ near $r=r_+$.
\end{proof}

For Lemma \ref{lem:sourcesink} to be useful, one needs a global nontrapping condition implying that all integral curves starting at $\widehat \Sigma_\pm$ either tend to $L_\pm$ or otherwise reach $\{ r= r_+ -\delta\}$ in appropriate time directions.

\begin{lem} \label{lem:classicalnontrapping}
	The integral curves of $\rho H_p$ satisfy the following.
	\begin{enumerate} \itemsep6pt
		\item If $(x,\xi) \in \widehat{\Sigma}_\pm$, then $\exp(\mp t \rho H_p)(x,\xi) \rightarrow L_\pm$ as $t\rightarrow \infty$.
		\item If $(x,\xi) \in \widehat{\Sigma}_\pm \setminus L_\pm$, then there exists $T>0$ such that 
		\[
		\exp(\pm T \rho H_p)(x,\xi) \in \{r \leq r_+ -\delta\}.
		\]
	\end{enumerate}
\end{lem}
\begin{proof} \begin{inparaenum} [(1)]
		\item This statement is already implied by \eqref{eq:qnm_def:escape}, since $\Delta'_r(r) > 0$ is bounded away from zero uniformly for $r \geq r_+-\delta$.

		\item This follows from the same argument as in \cite[Section 6.3]{vasy:2013}: recall that $\widehat{\Sigma}$ is contained in $\{ \Delta_r < (1+\varepsilon) a^2 \}$, and arguing as in the latter reference,
		\[
		((1+\varepsilon)a^2 - \Delta_r) \geq \frac{\varepsilon}{1+\varepsilon} \rho_1.
		\]
		Combined with the first part, this shows that eventually $r \leq r_+-\delta$ along the flow.
	\end{inparaenum}
\end{proof}

\begin{rem}
	In the Kerr--de Sitter case, an additional restriction must be placed on $a$ to ensure that the appropriate $\Delta_r$ in that case has derivative which is bounded away from zero in the region $\{\Delta_r \leq a^2\}$, see \cite[Eq. 6.13]{vasy:2013}. This is needed to show the above nontrapping condition, which in turn is crucial to showing discreteness of QNFs. This does not present a problem for Kerr--AdS spacetimes since $\Delta'_r(r)$ is always strictly positive for $r \geq r_+ - \delta$.
\end{rem}

Recall from Section \ref{subsect:kleingordon} that $P(\lambda)^* = P(\bar \lambda)$ with respect to the measure $\varrho^{-2} A \cdot dS_t$. With this choice,
\[
\Im P(\lambda) = \frac{1}{2i}\left(P(\lambda) - P(\lambda)^*\right) \in \Psi^1(X_\delta).
\]
The homogeneous principal symbol of $\Im P(\lambda)$ is calculated from the metric by
\[\sigma_1(\Im P(\lambda))(x,\xi) = 2\left( \Im \lambda \right) \varrho^2 g^{-1}(\xi\cdot dx,dt^\star).
\]
Therefore
\[
\rho\sigma_1(\Im P(\lambda))|_{L_\pm} = \mp 2  (1-\alpha)(r_+^2+a^2)\Im \lambda = -\varkappa^{-1} (\Im \lambda) (H_{p} \rho)|_{L_\pm} ,  
\]
where $\varkappa > 0$ is the surface gravity. This factorization of the subprincipal symbol at $L_\pm$ gives a threshold value for $\Im \lambda$ in the radial point estimates of Melrose \cite{melrose1994spectral}, adapted to this setting by Vasy \cite{vasy:2013}. The following microlocal result says regularity can be propagated away from $\mathcal{R}_\pm$  provided one works with high regularity Sobolev spaces; recall here that $\delta > 0$.
\begin{prop}[{\cite[Proposition 2.3]{vasy:2013}}] \label{prop:microradial}
	Given a compactly supported $G \in \Psi^0(X_\delta)$ such that $\mathcal{R}_\pm \subseteq \ELL(G)$, there exists a compactly supported $A \in \Psi^0(X_\delta)$ such that $\mathcal{R}_\pm \subseteq \ELL(A)$ with the following properties:
	
	Suppose $u \in \mathcal{D}'(X_\delta)$ and $GP(\lambda)u \in H^{s-1}(X_\delta)$ for $s \geq m$, where $m > 1/2 - \varkappa^{-1}\Im \lambda$. If there exists $A_1 \in \Psi^0(X_\delta)$ with $\mathcal{R}_\pm \subseteq \ELL(A_1)$ such that $A_1 u \in H^m(X_\delta)$, then $Au \in H^{s}(X_\delta)$. Moreover, there exists $\chi \in C_c^\infty(X_\delta)$ such that
	\[
	\| Au \|_{H^s(X_\delta)} \leq C  \left( \| GP(\lambda)u \|_{H^{s-1}(X_\delta)} +  \| \chi u \|_{H^{-N}(X_\delta)} \right)
	\]
	for each $N$.
\end{prop}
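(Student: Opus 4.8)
The plan is to prove Proposition~\ref{prop:microradial} by a positive commutator argument at the radial set $L_\pm$, following the now-standard scheme of Melrose and Vasy (see \cite[Sections~2.4,~2.8]{vasy:2013} and \cite[Section~4.4]{vasy:hyperbolic}). First I would work microlocally near $L_\pm$ using the projective coordinates $(\rho,\hat\xi_\theta,\hat\xi_\phi)$ introduced in Section~\ref{subsect:sourcesink}, and recall from \eqref{eq:quantitativesourcesink} and the surrounding discussion that $\rho_1 = \rho^2(1+p_1+p_2)$ is a boundary defining function for $L_\pm$ inside $\overline{T}^*X$ satisfying $\pm|\xi_r|^{-1}H_p(\rho_1)\geq 2(\partial_r\Delta_r)\rho_1 + \mathcal{O}(\rho_1^{3/2})$. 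The commutant will be of the form $a = \chi_0(\rho_1)\chi_1(\text{localizer in }x)\,\rho^{-s+1/2}$ where $\chi_0$ cuts off to a small neighborhood of $L_\pm$ and is chosen so that $\chi_0'\leq 0$ has the right sign (contributing a term with a definite sign away from $L_\pm$, which is where the hypothesis $A_1u\in H^m_h$ with $L_\pm\subseteq\ELL(A_1)$ gets absorbed, via a cutoff supported where $\chi_0'\neq 0$), and the weight $\rho^{-s+1/2}$ raises the order to $s$.

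The heart of the estimate is computing $\tfrac{i}{h}[P_h(z)^*, A^*A]$ — or rather its principal symbol together with the subprincipal correction. The principal symbol contribution is $H_p$ acting on the weight, which near $L_\pm$ produces a factor proportional to $-(2s-1)(\partial_r\Delta_r)$ times $|a|^2$, i.e. with a sign governed by whether $s > 1/2$. The subprincipal (i.e. the $\Im P_h(z)$) contribution at $L_\pm$ is, by the displayed factorization $\pm|\xi_r|^{-1}\Im P_h(z)|_{L_\pm} = (1-\alpha)(r_+^2+a^2)\Im z = \kappa^{-1}(\Im z)(H_p\rho)|_{L_\pm}$, exactly a term proportional to $2\kappa^{-1}\Im z$ times $|a|^2$ (the factor of $2$ coming from $\rho^{-2(-s+1/2)}$ being differentiated, i.e.\ $H_p$ hitting the $\rho$-weight twice). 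Adding these, the total coefficient of $|a|^2\rho_1^{-1}$ near $L_\pm$ is proportional to $-(\partial_r\Delta_r)\bigl((2s-1) - 2\kappa^{-1}h^{-1}\Im z\bigr)$ after restoring the semiclassical normalization, which is negative of a definite sign precisely when $s > 1/2 - \kappa^{-1}h^{-1}\Im z = m$; since $\partial_r\Delta_r(r_+) > 0$ by the nonextremality assumption, this gives a coercive (definite-sign) bulk term. The error terms $\mathcal{O}(\rho_1^{3/2})$ and the lower-order pieces of the commutator are absorbed by choosing the support of $\chi_0$ small enough and using that $\rho_1$ is small there, together with the standard $\mathcal{O}(h^\infty)\|\chi u\|_{H^{-N}_h}$ remainder.

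The argument then proceeds by the usual pairing: apply $\tfrac{i}{h}[P_h(z)^*,A^*A]$ to $u$, integrate by parts to write it as $2h^{-1}\Im\langle A^*A u, P_h(z)u\rangle$ plus the commutator identity, estimate the right side by Cauchy--Schwarz (peeling off $h^{-1}\|GP_h(z)u\|_{H^{s-1}_h}$ after inserting a microlocal cutoff $G$ elliptic near $L_\pm$, and $\|GP_h(z)u\|$ times $\|Au\|$ with the $h^{-1}$ absorbed since $P_h(z)$ has order $2$ while we gain one order of regularity), and absorb a small multiple of $\|Au\|_{H^s_h}^2$ from the right into the definite-sign left-hand term. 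One subtlety to be handled carefully, as flagged in the remark at the start of Section~\ref{sect:microlocalstudy}, is that when $|\Im z|$ is merely $\mathcal{O}(h)$ the term $q$ is of lower semiclassical order, so the sign condition and the factorization of $\Im P_h(z)$ must be read modulo $h S^1_h$; this is handled exactly as in \cite{dyatlov2013dynamical}, and is harmless because the threshold $m > 1/2 - \kappa^{-1}h^{-1}\Im z$ degrades to $m > 1/2 + \mathcal{O}(1)$ in that regime, consistent with the statement. A standard regularization (inserting $\rho^\epsilon$ and letting $\epsilon\to 0$, or equivalently working with a family of commutants of slowly increasing order) upgrades the a priori assumption $u\in\mathcal{D}'$ plus $A_1u\in H^m_h$ to the conclusion $Au\in H^s_h$ rather than merely an a priori estimate. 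The main obstacle is bookkeeping the interplay between the semiclassical order, the weight exponent, and the $z$-dependence of the subprincipal symbol so that the threshold comes out to be exactly $m > 1/2 - \kappa^{-1}h^{-1}\Im z$; once the factorization $\pm|\xi_r|^{-1}\Im P_h(z)|_{L_\pm} = \kappa^{-1}(\Im z)(H_p\rho)|_{L_\pm}$ is in hand, the sign analysis is essentially forced, so the real work is ensuring the error terms genuinely are lower order on a small enough neighborhood.
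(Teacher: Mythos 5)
Your plan is essentially the paper's own proof: the paper simply cites \cite[Propositions 2.10, 7.1]{vasy:2013} (simplified as in \cite[Proposition 4.5]{vasy:hyperbolic}) with precisely the commutant $\phi(\rho_1)\rho^{-s+1/2}$ you propose, so the approach and the use of \eqref{eq:quantitativesourcesink} and the factorization of $\sigma_h(\Im P_h(z))$ at $L_\pm$ are the same. Two small corrections: the combined coefficient near $L_\pm$ should be proportional to $-(\partial_r\Delta_r)\bigl((2s-1)+2\kappa^{-1}h^{-1}\Im z\bigr)$, i.e.\ with a plus sign, otherwise the definiteness condition you write would give the threshold $s>1/2+\kappa^{-1}h^{-1}\Im z$ rather than the stated $s>1/2-\kappa^{-1}h^{-1}\Im z$ (and the relative factor of $2$ comes from $\tfrac{h}{2}H_p(c^2)$ versus $\sigma_h(\Im P_h(z))c^2$ in the pairing identity, not from ``$H_p$ hitting the weight twice''). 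Also, in this above-threshold estimate the $\chi_0'$ term has the \emph{favorable} sign (that is exactly why $\chi_0'\leq 0$ is required where $\pm|\xi_r|^{-1}H_p\rho_1>0$) and is simply dropped; the a priori hypothesis $A_1u\in H^m_h$ is not absorbed on the annulus where $\chi_0'\neq 0$ but is used at $L_\pm$ itself to start the regularization/iteration --- control on a punctured neighborhood is the structure of the below-threshold statement, Proposition \ref{prop:microradialadjoint}, not of this one.
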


Similarly, there is a propagation result towards $\mathcal{R}_\pm$ provided one works with sufficiently low regularity Sobolev norms, where again $\delta > 0$.

\begin{prop} [{\cite[Proposition 2.4]{vasy:2013}}]\label{prop:microradialadjoint}
	Given a compactly supported $G \in \Psi^0(X_\delta)$ such that $\mathcal{R}_\pm \subseteq \ELL(G)$, there exist compactly supported $A ,B \in \Psi^0(X_\delta)$ such that $\mathcal{R}_\pm \subseteq \ELL(A)$ and $\WF(B) \subseteq \ELL(G) \setminus \mathcal{R}_\pm$, with the following properties:
	
	Suppose $u \in \mathcal{D}'(X_\delta)$ and $GP(\lambda)u \in H^{s-1}(X_\delta), \, Bu \in H^s(X_\delta)$ for $s < 1/2 - \varkappa^{-1}\Im \lambda$. Then $Au \in H^{s}(X_\delta)$, and moreover there exists $\chi \in C_c^\infty(X_\delta)$ such that
	\[
	\|Au\|_{H^s(X_\delta)} \leq C\left( \| GP(\lambda)u \|_{H^{s-1}(X_\delta)} + \|Bu\|_{H^s(X_\delta)}  + \| \chi u \|_{H^{-N}(X_\delta)} \right)
	\]
	for each $N$.
\end{prop}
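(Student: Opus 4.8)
The plan is to run a positive commutator argument dual to the one used for Proposition~\ref{prop:microradial}: this is the low-regularity radial point estimate propagating \emph{towards} the radial set $L_\pm$, and it corresponds to a combination of \cite[Proposition~2.11]{vasy:2013} with its semiclassical counterpart \cite[Proposition~7.1]{vasy:2013} (both are needed since $\Im z$ is only bounded below by $-C_0 h$ and need not be $\mathcal{O}(h)$; see also \cite{vasy:hyperbolic}). As in Proposition~\ref{prop:microradial}, I would use a commutant $\Op^h(c)$ with
\[
c = \phi(\rho_1)\,\rho^{-s+1/2},
\]
where $\rho = |\xi_r|^{-1}$, the quantity $\rho_1$ is as in \eqref{eq:quantitativesourcesink}, and $\phi \in C_c^\infty(\mathbb{R};[0,1])$ equals $1$ near $0$ with $\phi' \leq 0$, supported so close to $0$ that $\pm|\xi_r|^{-1}H_p\rho_1 > 0$ on $\supp(d\phi\circ\rho_1)$, which is possible by \eqref{eq:quantitativesourcesink}. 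The operator $A$ is taken elliptic on $L_\pm$ with $\WF(A)$ contained in the region where $\phi(\rho_1)\equiv 1$, while $B$ is chosen with $\WF(B)$ contained in the region where $d\phi\circ\rho_1$ does not vanish, hence inside $\ELL(G)\setminus L_\pm$.

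The key point is the sign of the principal symbol of $h^{-1}\Op^h(c)^*\big(\Op^h(c)P_h(z) - P_h(z)^*\Op^h(c)\big)$ near $L_\pm$. Expanding the commutator produces the main term $c\,H_p c$ together with the contribution of the subprincipal symbol $\sigma_h(\Im P_h(z))$. Using the factorization $\pm|\xi_r|^{-1}\Im P_h(z)|_{L_\pm} = \kappa^{-1}(\Im z)(H_p\rho)|_{L_\pm}$ recorded before the statement, together with the structure of $\left<\xi\right>^{-1}H_p$ near $L_\pm$ coming from \eqref{eq:quantitativesourcesink}, the combined contribution at $L_\pm$ of the weight $\rho^{-s+1/2}$ and of $\sigma_h(\Im P_h(z))$ is a positive multiple of $-s + 1/2 + \kappa^{-1}h^{-1}\Im z$. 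Hence the hypothesis $s < 1/2 + \kappa^{-1}h^{-1}\Im z$ gives this term the good sign, and the contribution of $\rho^{-s+1/2}$ in the directions transverse to $L_\pm$, governed again by \eqref{eq:quantitativesourcesink}, carries the same good sign throughout a sufficiently small neighbourhood of $L_\pm$ in which $\rho_1$ is small. A sharp G\aa rding inequality then yields a bound on $\|Au\|_{H^s_h(X)}^2$.

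The genuinely new feature compared with Proposition~\ref{prop:microradial} is the error term supported on $\supp(d\phi\circ\rho_1)$: since $\phi' \leq 0$ while $\pm|\xi_r|^{-1}H_p\rho_1 > 0$ there, this term carries the \emph{wrong} sign for propagation into the sink, so it cannot be absorbed into the main positive term; instead it is dominated using the a priori hypothesis $Bu \in H^s_h(X)$, which is exactly why $B$ enters the estimate. The remaining ingredients are routine: the right-hand side $GP_h(z)u \in H^{s-1}_h(X)$ enters through Cauchy--Schwarz in the commutator pairing, yielding the $h^{-1}\|GP_h(z)u\|_{H^{s-1}_h(X)}$ factor; pairing the commutator against the a priori merely distributional $u$ is justified by inserting a regularizer $\Op^h(\left<\xi\right>^{-\delta})$ into $c$, obtaining estimates uniform in $\delta \in (0,1]$, and letting $\delta \downarrow 0$; and the smoothing errors of the symbol calculus produce the $\mathcal{O}(h^\infty)\|\chi u\|_{H^{-N}_h(X)}$ terms. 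I expect the main obstacle to be the bookkeeping around this wrong-sign error term together with the $h$-dependent threshold: one must choose the supports of $\phi$, $A$, and $B$ so that the wrong-sign contribution lives precisely where $Bu$ is controlled, uniformly as $h \to 0$, while keeping the strict inequality $s < 1/2 + \kappa^{-1}h^{-1}\Im z$ intact through the regularization and the sharp G\aa rding step.
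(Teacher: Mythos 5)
Your proposal is correct and follows essentially the same route as the paper, which simply invokes Vasy's below-threshold radial point estimates and (as in Proposition \ref{prop:microradial}) the simplified commutant $c=\phi(\rho_1)\rho^{-s+1/2}$, with the wrong-sign $\phi'$ term controlled by $Bu$ exactly as you describe. One small correction: the semiclassical counterpart of \cite[Proposition 2.11]{vasy:2013} needed here is \cite[Proposition 7.2]{vasy:2013}, not 7.1 (which is the above-threshold estimate used in Proposition \ref{prop:microradial}).
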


Propositions \ref{prop:microradial}, \ref{prop:microradialadjoint} can also be applied to $P(\lambda)^*$, which switches the sign of $\Im \lambda$ in the threshold conditions.

\subsection{Analysis near $H_\delta$}

The next step is to estimate $u$ near the boundary $H_\delta$ in terms of $P(\lambda)u$. This may be done by observing that $P(\lambda)$ is strictly hyperbolic with respect to the hypersurfaces $\{r = \mathrm{constant}\}$ for $r\in (r_+-2\delta,r_+)$ and $\delta > 0$ sufficiently small. 

Given $R_1 < R_2$, let $X_{(R_1,R_2)} = \{R_1 < r < R_2\}$. Define $L^2(X_{(R_1,R_2)})$ with respect to any density which is smooth on the closure of $X_{(R_1,R_2)}$, observing that the closure is compact. If $k \in \mathbb{N}$, then $H^k(X_{(R_1,R_2)})$ will denote distributions $u \in L^2(X_{(R_1,R_2)})$ such that 
\[
V_1 \cdots V_N u \in L^2(X_{(R_1,R_2)})
\]
for any collection $V_1,\ldots, V_N$ of at most $k$ smooth vector fields on the closure of $X_{(R_1,R_2)}$. Elements of $H^k(X_{(R_1,R_2)})$ are extendible in the sense of \cite[Appendix B.2]{hormanderIII:1985} --- in the notation there,
\[
H^k(X_{(R_1,R_2)}) = \OL{H}^k(X_{(R_1,R_2)}).
\]
The next result is a consequence of basic energy estimates for hyperbolic equations, see \cite[Proposition 2.13]{hintz2013semilinear}, \cite[Proposition 3.8]{vasy:2013} in this setting, as well as \cite[Theorem 23.2.1]{hormanderIII:1985}, \cite[Section 2.8]{taylor1996partial}.

\begin{prop} \label{lem:qnm_def:hyperbolic}
	Fix $r_+-2\delta < R_0 < R_1 < R_2 < r_+$, and let $u \in H^1(X_{(R_0,R_2)})$. If $u \in H^{k+1}(X_{(R_1,R_2)})$ and $P(\lambda)u \in H^{k}(X_{(R_0,R_2)})$ for some $k \in \mathbb{N}$, then $u \in H^{k+1}(X_{(R_0,R_2)})$. Furthermore,
	\[
	\| u \|_{H^{k+1}(X_{(R_0,R_1)})} \leq C\left( \| P(\lambda)u \|_{H^k(X_{(R_0,R_2)})} + \| u \|_{H^{k+1}(X_{(R_1,R_2)})} \right),
	\]
	where $C>0$ is independent of $u$.
\end{prop}

Observe that regularity can be also be propagated backwards in Proposition \ref{lem:qnm_def:hyperbolic} by considering $-P(\lambda)$. Proposition \ref{lem:qnm_def:hyperbolic} also applies to $P(\lambda)^*$.

\subsection{Energy estimates}

Energy estimates will also be used to prove that $P(\lambda)$ is invertible in the upper half-plane. Let $N_t$ denote the future-pointing unit normal to $X_\delta$ (the time orientation is determined by the timelike covector $dt^\star$). In this subsection it is important to consider $\delta \geq 0$, but to begin assume that $\delta>0$. 

Let $dS_r$ denote the induced measure on $H_\delta$ and $N_r$ be the outward-pointing unit normal to $\mathcal{H}_\delta$. Both $N_t, \, N_r$ are timelike, and they lie in the same lightcone over $\mathcal{H}_\delta$. Recall that
\[
dg = A\cdot  dt^\star\, dS_t,
\]
where $A = g^{-1}(dt^\star,dt^\star)^{-1/2}$ and $dg$ is the volume measure. Also, if $k$ denotes the induced (Riemannian) metric on the spacelike hypersurface $\mathcal H_\delta$, let $A_r = k^{-1}(dt^\star,dt^\star)^{-1/2}$.

If $V$ is a $C^1$ vector field on $\mathcal{M} \cup \mathcal{H}$ vanishing near $\scri$, then differentiating the divergence theorem at $t^\star = 0$ gives the identity
\begin{equation} \label{eq:divergence0}
\frac{d}{dt^\star} \int_{X_\delta} g(V,N_t)  \, dS_t  + \int_{H_\delta} g(V,N_r)\, A_r\, dS_r= \int_{X_\delta} \left(\mathrm{div}_{g} V \right) A\,dS_t.
\end{equation}

Now suppose that $\delta =0$. In that case the hypersurface $\mathcal{H}_0 = \mathcal{H}^+$ is null, and hence $N_r$ is ill-defined. Nevertheless, setting $N_r = K$ as in \eqref{eq:generator} and $A_r=1$, the equality \eqref{eq:divergence0} still holds. Note that $dS_r$ is always well defined since $H_\delta \subseteq X_\delta$ and $X_\delta$ is spacelike.

Given a $C^2$ function $v$ on ${\mathcal{M}}_\delta \cup \mathcal{H}_\delta$, the stress-energy tensor $\mathbb{T} = \mathbb{T}[v]$ associated to the wave equation is 
\[
\mathbb{T}(Y,Z) =\Re \left( Yv \cdot Z\bar{v} \right) - \tfrac{1}{2}g(Y,Z) g^{-1}(dv,d\bar {v}).
\]
Here $Y,Z$ are real $C^1$ vector fields on $\mathcal{M}_\delta \cup \mathcal{H}_\delta$. It is well known that $\mathbb{T}(Y,Z)$ is nonnegative if $Y, Z$ are causal (timelike or null) in the same lightcone, and positive definite in $dv$ if both $Y, Z$ are timelike \cite[Lemma 24.1.2]{hormanderIII:1985}. 

Let $\mathbb{J}^Y = \mathbb{J}^Y[v]$ be the unique vector field such that $g(\mathbb{J}^Y,Z) = \mathbb{T}(Y,Z)$. If $F =  (\Box_g + \nu^2 - 9/4)v$, then  
\begin{equation} 
\mathrm{div}_{g}\, \mathbb{J}^Y = \Re \left( F \cdot Y\bar{v} \right) + Q,
\end{equation}
where $Q$ is a real quadratic form in $(v,dv)$. Apply \eqref{eq:divergence0} to the vector field $\mathbb{J}^Y$, where $v$ vanishes for $r$ sufficiently large. This yields the identity
\begin{equation} \label{eq:qnm_def:divergence} 
\frac{d}{dt^\star} \int_{X_\delta} \mathbb{T}(Y,N_t)\, dS_t + \int_{H_\delta} \mathbb{T}(Y, N_r) \,A_r \, dS_r = 
\int_{X_\delta} \left( \Re \left( F \cdot Y\bar{v} \right) + Q \right)A \,dS_t.
\end{equation}

Now suppose that $Y,Z$ are stationary in the sense that $\mathcal{L}_{\partial_{t^\star}} Y= \mathcal{L}_{\partial_{t^\star}} Z = 0$. Given a function $u$ on $X_\delta$, let $v= e^{-i\lambda t^\star} u$, viewed as a function on $\mathcal{M}_\delta$. Then, the stress-energy tensor associated to $v = e^{-i\lambda t^\star}u$ satisfies
\[
\frac{d}{dt^\star} \mathbb{T}[v](Y,Z) = 2(\Im \lambda)\mathbb{T}[v](Y,Z).
\]
Furthermore, if the stationary function $e^{-2(\Im\lambda) t^\star}\,\mathbb{T}[v](Y,Z)$ is viewed as a function on $X_\delta$, then it is a positive definite quadratic form in $(du,\lambda u)$, where now $du$ is the differential of $u$ on $X_\delta$.

On the other hand, if $v = e^{-i \lambda t^\star}u$, then for $t^\star =0$ the integrand on the right hand side of \eqref{eq:qnm_def:divergence} can be written as
\[
\varrho^{-2}\Re \big( P(\lambda)u \cdot \OL{Y(\lambda)u} \,\big)   +  Q(\lambda),
\]  
where $Q(\lambda)$ is a quadratic form in $(du,u,\lambda u)$, and $Y(\lambda)u = e^{i\lambda t^\star}Y(e^{-i\lambda t^\star}u)$.

%
%
%
%
%
%
%

\begin{lem} \label{lem:qnm_def:invertible1}
	Fix $\delta \geq 0,\, R > r_+$, and let $u \in C^2_c(\{ r_+ - \delta \leq r < R \})$. There exists $C_0>0$ such that
	\[
	|\lambda| \| u \|_{\mathcal{L}^2(X_\delta)} + \| du \|_{\mathcal{L}^2(X_\delta)} \leq \frac{C}{\Im \lambda} \| P(\lambda) u \|_{\mathcal{L}^2(X_\delta)}
	\]
	for $\Im \lambda > C_0$, where $C>0$ is independent of $\lambda$ and $u$.
\end{lem}
\begin{proof}
	Apply \eqref{eq:qnm_def:divergence} with the multiplier $Y = N_t$ and $v = e^{-i\lambda t^\star}u$, recalling that all terms are evaluated at $t^\star = 0$. First, observe that the integral over $H_\delta$ is nonnegative, since $N_r$ and $N_t$ are both in the same lightcone (of course $N_r = K$ is null if $\delta = 0$).
	With $f = P(\lambda)u$,
	\[
	\Im \lambda \left( |\lambda|^2 \| u \|_{\mathcal L^2(X_\delta)}^2  + \| du \|_{\mathcal L^2(X_\delta)}^2 \right) \leq  C\int_{X_\delta} \big(  \varrho^{-2} \Re \big(  f  \cdot \OL{N_t(\lambda) u} \big) + Q(\lambda) \,\big) A \, dS_t.
	\]
	Both $A$ and $\varrho^{-2}$ are bounded by constants depending on $R$. Furthermore, the quadratic form $Q(\lambda)$ can be absorbed into the left hand side for $\Im \lambda >0$ sufficiently large. The integrand involving $f$ is bounded by Cauchy--Schwarz, yielding
	\[
	\Im \lambda \left( |\lambda|^2 \| u \|_{\mathcal L^2(X_\delta)}^2  + \| du \|_{\mathcal L^2(X_\delta)}^2 \right) \leq \frac{C}{\Im \lambda} \| f \|^2_{\mathcal L^2(X_\delta)}
	\]
	as desired.\end{proof}

A similar argument applies to $P(\lambda)^*$ provided $u$ vanishes along $H_\delta$.

\begin{lem} \label{lem:qnm_def:invertible2}
Fix $\delta \geq 0,\, R> r_+$, and let $u \in C^2_c(\{r_+ - \delta \leq r < R\})$ be such that $u|_{H_\delta} = 0$. There exists $C_0>0$ such that
	\[
	|\lambda| \| u \|_{\mathcal L^2(X_\delta)} + \| du \|_{\mathcal L^2(X_\delta)} \leq \frac{C}{\Im \lambda} \| P(\lambda)^* u \|_{\mathcal L^2(X_\delta)},
	\]
	for $\Im \lambda > C_0$, where $C>0$ is independent of $\lambda$ and $u$.
\end{lem}
\begin{proof}
	Since $P(\lambda)^* = P(\bar \lambda)$, apply \eqref{eq:qnm_def:divergence} to $v = e^{-i\bar\lambda t^\star}u$ with the multiplier $Y = N_t$; the difference is that now the two integrals on the left hand side of \eqref{eq:qnm_def:divergence} have opposite signs for $\Im \lambda > 0$. However, if $u$ vanishes at $H_\delta$, then the same argument as in Lemma \ref{lem:qnm_def:invertible1} applies.
\end{proof}

\section{The anti-de Sitter end}  \label{sect:adsend}

This section concerns the analysis near $Y$, hence does not depend on any extension of the metric across the horizon. After a conjugation by $r$, the rescaled stationary Klein--Gordon operator $P(\lambda)$
 is a parameter-dependent Bessel operator in the sense of Section \ref{subsect:bessel}:
 
\begin{lem} \label{lem:adsiselliptic}
	$r P(\lambda) r^{-1}$ is a Bessel operator of order $\nu$ near $Y$. Furthermore, $r P(\lambda) r^{-1}$ is parameter-elliptic with respect to any angular sector $\Lambda \subset \mathbb{C}$ disjoint from $\mathbb{R}\setminus 0$.
\end{lem}
\begin{proof}
	Observe from \eqref{eq:dualmetric} that for $r$ sufficiently large, $dr$ is orthogonal to the span of $\{d{t^\star}, d\theta,d{\phi^\star}\}$. Therefore the only term in $\Box_g$ involving $r$-derivatives is
	\[
	\varrho^{-2} D_r \left( \Delta_r D_r \right).
	\]
	The remaining terms in $\Box_g$ are smooth up to $\scri$ after multiplication by $\varrho^{2}$. In the notation in Section \ref{subsubsect:besseldefs}, let $x'$ be local coordinates on $Y$ and $x_n = s$. From \eqref{eq:evenasymptotics} it is verified that $r P(\lambda) r^{-1}$ can locally be written in the form \eqref{eq:besseloperator}.

The parameter-ellipticity of $rP(\lambda)r^{-1}$ at $Y$ follows from the timelike nature of $\partial_{t^\star}$ and $dt^\star$ at $\scri$ with respect to the conformal metric $s^2 g$, using the same argument as in Lemma \ref{lem:lorentzianmetrics}.
	\end{proof}
	
	Conjugation by $r^{-1}$ corresponds to working with $r \mathcal{L}^2(X_\delta)$ based spaces.  Note that the measure defining $r \mathcal{L}^2(X_\delta)$ is locally equivalent near $Y$ to ordinary Lebesgue measure, agreeing with the convention in Section \ref{subsubsect:besseldefs}. Henceforth $P(\lambda)$ will be considered instead of $r P(\lambda) r^{-1}$, making sure to account for the additional conjugation.

When $0 < \nu < 1$, the operator $P(\lambda)$ must be augmented by elliptic boundary conditions as in Section \ref{subsubsect:boundaryoperators}. Thus assume that $T(\lambda)$ is a parameter-dependent boundary operator of the form
\[
T(\lambda) = (T_1^- + \lambda T_0^-)\gamma_- + T_1^+ \gamma_+,
\]
where the weighted restrictions $\gamma_\pm$ are given by
\[
\gamma_- u = s^{\nu-3/2} u|_{Y}, \quad \gamma_+ u = -s^{1-2\nu} \partial_s(s^{\nu-3/2} u)|_{Y}.
\]
Here $\gamma_\pm$ are redefined from Section \ref{subsubsect:boundaryoperators} to account for the conjugation by $r^{-1}$. It is assumed that the ``principal part'' of $T(\lambda)$ (in the sense of Section \ref{subsubsect:boundaryoperators}) is independent of $\lambda$. Ellipticity and parameter-ellipticity of the operator
\[
\mathscr{P}(\lambda) = \begin{pmatrix} P(\lambda) \\ T(\lambda) \end{pmatrix}
\]
with respect to $\Lambda$ were defined in Section \ref{subsubsect:lopatinskii}. 

\subsection{Function spaces} \label{subsect:adsfunctionspaces}

Following \cite[Section 4]{gannot:bessel}, ellipticity is used to prove coercive estimates for functions supported near $Y$. These local estimates should be understood as comprising part of a global estimate. For this reason, it is useful to state them on function spaces which are globally defined on $X_\delta$. These spaces are now described.

Let $\Sob^1(X_\delta)$ denote the set of all distributions $u \in \mathcal{L}^2(X_\delta)$ such that $s^{3/2-\nu}d(s^{\nu-3/2} u) \in \mathcal{L}^2(X_\delta)$, where the magnitude of a covector is measured with respect to a smooth norm on $\OL{X}_\delta$. Set 
\[
\| u \|_{\Sob^1(X_\delta)} = \| u \|_{\mathcal{L}^2(X_\delta)} + \| s^{3/2-\nu}d(s^{\nu-3/2} u) \|_{\mathcal{L}^2(X_\delta)}.
\]
 To define higher order spaces, let $\mathcal{V}_b(\OL{X}_\delta)$ denote the space of smooth vector fields on $\OL{X}_\delta$ which are tangent to $Y$ (but \emph{not} necessarily to $H_\delta$). Given $k\in \mathbb{N}$ and $s=0,1$, let $\Sob^{s,k}(X_\delta)$ denote the set of distributions $u$ such that 
 \[
 V_1 \cdots V_N u \in \Sob^{s}(X_\delta)
 \] 
 for any collection $V_1,\ldots,V_N$ of at most $k$ vector fields in $\mathcal{V}_b(\OL{X}_\delta)$. These spaces can be normed in the obvious way by fixing a finite generating set of vector fields for $\mathcal{V}_b(\OL{X}_\delta)$. Over any compact subset of $X_\delta$ the norms of $\Sob^{s,k}(X_\delta)$ and $H^{s+k}(X_\delta)$  are equivalent.

If $0 < \nu < 1$, let ${\mathcal{F}}_\nu(X_\delta)$ denote the space of $u \in C^\infty(X_\delta\cup H_\delta)$ which near $Y$ have the form
\[
s^{3/2-\nu}u_-(s^2,y) + s^{3/2+\nu}u_+(s^2,y)
\]
for $u_\pm \in C^\infty([0,\varepsilon)_s\times Y)$. If $\nu \geq 1$ then ${\mathcal{F}}_\nu(X_\delta)$ is defined to be $C_c^\infty(X_\delta\cup H_\delta)$. In both cases $\mathcal{F}_\nu(X_\delta)$ is dense in $\Sob^{s,k}(X_\delta)$ \cite[Section 3]{gannot:bessel}.

\begin{rem} Duality for these spaces is not described here; a detailed discussion, including everything needed for this paper, can be found in \cite[Sections 3, 4, 5]{gannot:bessel}.
	\end{rem}

\subsection{Elliptic estimates} \label{subsect:adsellipticestimate}

The results in this section follow from \cite[Theorems 1, 2, 3]{gannot:bessel}. First assume that $\nu \geq 1$. According to Lemma \ref{lem:adsiselliptic}, $P(\lambda)$ is elliptic at $Y$, and parameter-elliptic at $Y$ with respect to any angular sector $\Lambda \subset \mathbb{C}$ disjoint from $\mathbb{R}\setminus 0$. If $0 < \nu < 1$, then ellipticity and parameter-ellipticity must be assumed for $\mathscr{P}(\lambda)$ with respect to $\Lambda$.

\begin{prop}[{\cite[Theorems 1, 3]{gannot:bessel}}] \label{prop:nugeq1}
	Let $k \in \mathbb{N}$. There exists $\varepsilon > 0$ such that if $\varphi, \chi \in C_c^\infty(\{0 \leq s < \varepsilon\})$ satisfy $\varphi = 1$ near $s=0$ and $\chi = 1$ near $\supp \varphi$, then the following hold:
	\begin{enumerate}  \itemsep6pt
		\item If $\nu \geq 1$, then there exists $C>0$ such that
		\[
		\|  \varphi u \|_{\Sob^{1,k}(X_\delta)} \leq C \left( \| \chi P(\lambda)u \|_{\Sob^{0,k}(X_\delta)} + \| \chi u \|_{\Sob^{0}(X_\delta)} \right)
		\]
		for each $u \in \mathcal{F}_\nu(X_\delta)$.
		
		\item If $0 < \nu < 1$ and $\mathscr{P}(\lambda)$ is elliptic at $Y$, then there exists $C>0$ such that
		\begin{align*}
		\|  \varphi u \|_{\Sob^{1,k}(X_\delta)}  \leq  C (	\| \chi \mathscr{P}(\lambda)u \|_{\Sob^{0,k}(X_\delta)\times H^{k+2-\mu}(Y)}
		+ \| \chi u \|_{\Sob^{0}(X_\delta)} )
		\end{align*}
		for each $u \in \mathcal{F}_\nu(X_\delta)$.
	\end{enumerate}
\end{prop}

\noindent There is also a regularity statement associated with Proposition \ref{prop:nugeq1}, namely if $u \in \Sob^0(X_\delta)$ and the right-hand sides are finite, then so are the left-hand sides. Making sense of this when $0 < \nu < 1$ is slightly subtle, and the reader is again referred to \cite{gannot:bessel} for details. 

To obtain estimates which are uniform $\lambda$, parameter-ellipticity is used. These estimate are only used with $k=0$.

\begin{prop} [{\cite[Theorem 2]{gannot:bessel}}]\label{prop:0<nu<1}
	Fix an angular sector $\Lambda \subset \mathbb{C}$ such that $P(\lambda)$ and $\mathscr{P}(\lambda)$ are parameter elliptic at $Y$ with respect to $\Lambda$. There exists $\varepsilon > 0$ such that if $\varphi, \chi \in C_c^\infty(\{0 \leq s < \varepsilon\})$ satisfy $\varphi = 1$ near $s=0$ and $\chi = 1$ near $\supp \varphi$, then the following hold:
	\begin{enumerate}  \itemsep6pt
		\item If $\nu \geq 1$, then there exists $C>0$ such that
		\[
		|\lambda| \| \varphi u \|_{\Sob^0(X_\delta)} + \|\varphi u \|_{\Sob^{1}(X_\delta)} \leq C \left( \| \chi P(\lambda)u \|_{\Sob^{0}(X_\delta)} + \| \chi u \|_{\Sob^{0}(X_\delta)} \right)
		\]
		for each $u \in \mathcal{F}_\nu(X)$ and $\lambda \in \Lambda$.
		
		\item If $0 < \nu < 1$, then there exists $C>0$ such that
		\begin{align*}
		|\lambda| \| \varphi u \|_{\Sob^0(X_\delta)} +\|  \varphi u \|_{\Sob^{1}(X_\delta)}  \leq  C (	\| \chi \mathscr{P}(\lambda)u \|_{\Sob^{0}(X_\delta)\times H^{2-\mu}(Y)}
		+ \| \chi u \|_{\Sob^{0}(X_\delta)} )
		\end{align*}
		for each $u \in \mathcal{F}_\nu(X_\delta)$ and $\lambda \in \Lambda$.
	\end{enumerate}
\end{prop}

Estimates for the formal adjoint $P(\lambda)^*$ if $\nu \geq1$, or $\mathscr{P}(\lambda)^*$ if $0<\nu < 1$, also hold. However, the formal adjoint $\mathscr{P}(\lambda)^*$ is no longer a scalar operator --- see \cite[Section 4]{gannot:bessel} where the formal adjoint is defined (which is entirely analogous to the adjoint in the Boutet de Monvel calculus for smooth boundary value problems \cite{de1971boundary}). Furthermore, \cite[Theorem 1]{gannot:bessel} only treats estimates for the formal adjoint when $k=0$, although this does not present a problem here.

\section{Fredholm property and meromorphy} \label{sect:fredholm}

In this section the Fredholm property for $P(\lambda)$ and meromorphy of $P(\lambda)^{-1}$ are derived from estimates on $P(\lambda)$, combined with some standard arguments from functional analysis. Of course $P(\lambda)$ should be replaced by $\mathscr{P}(\lambda)$ when $0 < \nu < 1$. For $\delta \geq0$, introduce the space
\[
\mathcal{X}^k(X_\delta) = \{u \in {\Sob}^{1,k}(X_\delta) : P(0) u \in {\Sob}^{0,k}(X_\delta) \},
\]
equipped with the norm $\| u \|_{{\Sob}^{1,k}(X_\delta)} + \| P(0)u \|_{{\Sob}^{0,k}(X_\delta)}$. This space is complete, and in fact ${\mathcal{F}}_\nu(X_\delta)$ is dense in $\mathcal{X}^k(X_\delta)$ \cite[Lemma 5.1]{gannot:bessel}.

\subsection{The case $\nu \geq 1$}

Fix $\delta > 0$ and consider the simpler case $\nu \geq 1$ first. Initially, the goal is to prove that 
\[
P(\lambda) : \mathcal{X}^k(X_\delta) \rightarrow \Sob^{0,k}(X_\delta)
\] 
has closed range and finite dimensional kernel for each $k \in \mathbb{N}$, provided $\lambda$ lies in an appropriate half-plane.

\begin{prop} \label{prop:nugeq1P}
	If $C_0 < \varkappa(k+1/2)$, then there exists $\varphi \in C_c^\infty(X_\delta \cup Y)$ and $\chi \in C_c^\infty(X_\delta)$ such that
	\begin{equation} \label{eq:nugeq1globalelliptic}
	\| u \|_{{\Sob}^{1,k}(X_\delta)} \leq C \left( \| P(\lambda)u \|_{{\Sob}^{0,k}(X_\delta)} + \| \chi u \|_{H^{-N}(X_\delta)} + \| \varphi u \|_{ {\Sob}^{0}(X_\delta) } \right)
	\end{equation}
	for any $N$ and $u \in \mathcal{F}_\nu(X_\delta)$, provided $\Im \lambda > -C_0$. 
\end{prop} 

\begin{proof}
Begin by choosing two functions $\zeta, \psi \in C^\infty(\OL X_\delta; [0,1])$ subject to the following conditions:
	\begin{enumerate} \itemsep6pt
		\item $\supp \psi \subseteq \{0 \leq s < \varepsilon \}$ and $\psi =1$ near $s=0$, where $\varepsilon>0$ is sufficiently small.
		\item $\supp \zeta \subseteq \{ r_+-\delta \leq r < r_+- 2\delta/3 \}$ and $\zeta = 1$ near $\{ r_+-\delta \leq r < r_+-3\delta/4 \}$.
	\end{enumerate}
		Let $u \in {\mathcal{F}}_\nu(X_\delta)$ and $f = P(\lambda)u$. It is possible to find a microlocal partition of unity
	\[
	1 = \zeta + \psi + \sum_{j=1}^J A_j + R,
	\]
	where the operators $A_j \in \Psi^0(X_\delta), \ R \in \Psi^{-\infty}(X_\delta)$ are compactly supported, and each $A \in \{A_1,\ldots, A_J\}$ has one of the following properties:
	
	\begin{enumerate} \itemsep6pt
		\item $\WF(A) \subseteq \ELL(P(\lambda))$. By microlocal elliptic regularity (Proposition \ref{prop:microelliptic}),
		\[
		\| A u \|_{H^{s+1}(X_\delta)} \leq C \| G f \|_{H^{s-1}(X_\delta)} + \| \chi u \|_{H^{-N}(X_\delta)}
		\] 
		for $G$ microlocalized near $\WF(A)$ and some $\chi \in C_c^\infty(X_\delta)$.
		
		\item \label{it:sourcesink} $\WF(A)$ is contained in a small conic neighborhood of $\mathcal{R}_\pm$. In order to apply Proposition \ref{prop:microradial}, the imaginary part of $\lambda$ must satisfy $\Im \lambda \geq -C_0$ for some $C_0 <  \varkappa (s-1/2)$. In that case,
		\[
		\| A u \|_{H^s(X_\delta)} \leq C \left( \| G f \|_{H^{s-1}(X_\delta)} + \| \chi u \|_{H^{-N}(X_\delta)} \right)
		\] 
		for some $G$ microlocalized near $\WF(A)$ and some $\chi \in C_c^\infty(X_\delta)$.
		
		\item $\WF(A)$ is contained in a conic neighborhood of a point $(x_0,\xi_0) \in {\Sigma}_+ \setminus \mathcal{R}_+$. Then there is a conic neighborhood $U_+ \supseteq \mathcal{R}_+$ such that for each $B\in\Psi^0(X_\delta)$ with $\WF(B) \subseteq U_+$ and $(x,\xi) \in \WF(A)$, there exists $T>0$ with
		\[
		\exp(-T H_{p})(x,\xi) \subseteq \ELL(B).
		\] 
	This follows from Lemma \ref{lem:classicalnontrapping}, shrinking $\WF(A)$ if necessary. It is now possible to combine propagation of singularities (\cite[Section 2.3]{vasy:2013}) with the previous item \eqref{it:sourcesink}. For some $G_1$ microlocalized near the union of flow lines emanating from $\WF(A)$ and $G$ as in \eqref{it:sourcesink},
		\[
		\| Au \|_{H^s(X_\delta)} \leq C\left( \| Gf \|_{H^{s-1}(X_\delta)} + \| G_1 f\|_{H^{s-1}(X_\delta)} +  \| \chi u \|_{H^{-N}(X_\delta)} \right) 
		\]
		for some $\chi \in C_c^\infty(X_\delta)$.
		The same argument applies if $(x_0,\xi_0) \in {\Sigma}_- \setminus \mathcal{R}_-$, reversing the direction of propagation.
	\end{enumerate}
The estimates on $Au$ are applied with Sobolev index $s = 1+k$ where $k \in \mathbb{N}$, which gives $C_0 < \varkappa(k+1/2)$. The term $\psi u$ is then estimated in ${\Sob}^{1,k}(X_\delta)$ using Proposition \ref{prop:nugeq1}, provided $\supp \psi$ is sufficiently small. In the region where $r < r_+$, apply Lemma \ref{lem:qnm_def:hyperbolic}:
	\[
	\|\zeta u \|_{{H}^{k+1}(X_\delta)} \leq  C \left(  \| P(\lambda)u \|_{\Sob^{0,k}(X_\delta)} + \|\zeta' u \|_{H^{k+1}(X_\delta)}\right),
	\]
	where $\zeta'$ has compact support in $\{ r_+ -\delta/2 < r < r_+\}$. In particular, 
	\[
	\zeta' \zeta = \zeta' \psi = 0,
	\]
	and hence $A_1 + \cdots + A_J$ is elliptic on $\supp \zeta'$ (lifted to $T^*X_\delta$). Therefore $\zeta' u$ is controlled by the $A_j u$ terms handled above.
\end{proof}

Although \eqref{eq:nugeq1globalelliptic} of Proposition \ref{prop:nugeq1P} is stated as an a priori estimate (namely $u$ is assumed to lie in $\mathcal{F}_\nu(X_\delta)$), the proof also gives the following regularity result:

\begin{prop} \label{prop:regularity}
If $\Im \lambda > -\varkappa(k+1/2)$ and $u \in \Sob^{1,k}(X_\delta)$ satisfies $P(\lambda)u \in \Sob^{0,k'}(X_\delta)$ for $k' \geq k$, then $u \in \mathcal{X}^{k'}(X_\delta)$.
\end{prop}

The multiplication maps $\varphi : \Sob^{1,k}(X_\delta) \rightarrow \Sob^{0}(X_\delta)$ and $\chi: \Sob^{1,k}(X_\delta) \rightarrow H^{-N}(X_\delta)$
are compact provided $N$ is sufficiently large; in the former case, compactness comes from \cite[Lemma 3.21]{gannot:bessel}. Then the a priori estimate \eqref{eq:nugeq1globalelliptic} shows that $P(\lambda) : \mathcal{X}^k(X_\delta) \rightarrow \Sob^{0,k}(X_\delta)$ has closed range and finite-dimensional kernel for $\Im \lambda  > -\varkappa(k+1/2)$.

\begin{lem} \label{lem:qnm_def:L2invertible}
	Fix an angular sector $\Lambda$ in the upper half-plane which is disjoint from $\mathbb{R}\setminus 0$. Then there exists $R>0$ such that  $P(\lambda) : \mathcal{X}^0(X_\delta) \rightarrow \Sob^0(X_\delta)$ is invertible for $\lambda \in \Lambda$ and $|\lambda|>R$.
\end{lem}
\begin{proof}
	\begin{inparaenum}[(1)] \item If $\Im \lambda > -\varkappa/2$, then $u \in \mathcal{X}^0(X_\delta)$ and $P(\lambda)u=0$ together imply that $u\in C^\infty(X_\delta \cup H_\delta)$ --- this follows from Proposition \ref{prop:regularity}. Next, fix $\varphi \in C^\infty_c(X_\delta\cup H_\delta)$. 
		Since $u$ is smooth and $\varphi u $ has support in a fixed compact set, from Lemma \ref{lem:qnm_def:invertible1} there exist $C_0>0$
		\[
		|\lambda| \| \varphi u \|_{\mathcal{L}^2(X_\delta)} + \| \varphi u\|_{\Sob^1(X_\delta)}   \leq \frac{C}{\Im \lambda}\|P(\lambda)(\varphi u)\|_{\mathcal L^2(X_\delta)}
		\]
		for $\Im \lambda > C_0$, where $C>0$ depends only on the support of $\varphi$. Note that $P(\lambda)\varphi u = [P(\lambda),\varphi]u$, which is therefore estimated by
		\[
		\| [P(\lambda),\varphi]u \|_{\mathcal{L}^2(X_\delta)} \leq C\left( \|u\|_{\Sob^1(X_\delta)} + |\lambda|\| u \|_{\mathcal{L}^2(X_\delta)}\right).
		\]
		On the other hand, if $\supp \varphi$ is sufficiently large, then from Proposition \ref{prop:0<nu<1},
		\[
		|\lambda| \| (1-\varphi) u \|_{\mathcal{L}^2(X_\delta)}  + \| (1-\varphi) u \|_{\Sob^1(X_\delta)} \leq C	\| u \|_{\mathcal{L}^2(X_\delta)}
		\]
		for $\lambda \in \Lambda$.  Combining the estimates for $\varphi u$ and $(1-\varphi)u$ shows that
		\begin{align*}
		|\lambda|\| u \|_{\mathcal{L}^2(X_\delta)} + \| u \|_{\Sob^1(X_\delta)} &\leq {C}\left(|\lambda|^{-1} + (\Im \lambda)^{-1} \right) |\lambda| \| u \|_{\mathcal{L}^2(X_\delta)}  \\ &+ C\left(\Im \lambda\right)^{-1} \| u \|_{\Sob^1(X_\delta)}
		%
		\end{align*}
		provided $\Im \lambda > C_0$ and $\lambda \in \Lambda$. Since $\Lambda$ is contained in the upper half-plane, $\lambda \in \Lambda$ and $|\lambda|$ large imply that $u= 0$. Therefore $P(\lambda)$ is injective in this region.

		\item To prove that $P(\lambda)$ is surjective, it suffices to prove that the formal adjoint $P(\lambda)^*$ is injective on $\mathcal{L}^2(X_\delta)$. For duality purposes, $P(\lambda)^*$ acts on $\mathcal{L}^2(X_\delta)$ in the sense of distributions supported on $X_\delta\cup H_\delta$, see \cite[Appendix B.2]{hormanderIII:1985}. Therefore $P(\lambda)^*v = 0$ means that
		\begin{equation} \label{eq:duality}
		\int_{X_\delta} \big( \,\OL{P(\lambda)\phi} \cdot v \big) \, \varrho^{-2}A \,dS_t = 0
		\end{equation}
		for each $\phi \in \mathcal{F}_\nu(X_\delta)$. Extend $v$ by zero to $v_1 \in \mathcal{L}^2(X_{2\delta})$.  Now $P(\lambda)^*$ is still defined on $X_{2\delta}$, and $P(\lambda)^*  v_1 =0$ in the sense of distributions on $X_{2\delta}$ since any $\phi \in C_c^\infty(X_{2\delta})$ can be restricted to an element of $\mathcal{F}_\nu(X_\delta)$. Since 
		\[
		\supp v_1 \subseteq \{r \geq r_+-\delta \},
		\] 
		$v_1$ is smooth on $\{ r_+ -2\delta < r < r_+\}$ by Lemma \ref{lem:classicalnontrapping} and propagation of singularities. Therefore $v_1 = 0$ on $\{ r_+ -2\delta < r < r_+\}$ by Proposition \ref{lem:qnm_def:hyperbolic}. The same argument as in Proposition \ref{prop:nugeq1P} now shows that $v \in C^k(X_\delta\cup H_\delta)$ provided $\Im \lambda > 0$ is sufficiently large depending on $k$ --- this involves replacing $P(\lambda)$ with $P(\lambda)^*$ and then using Proposition \ref{prop:microradialadjoint} instead of Proposition \ref{prop:microradial}. Furthermore, $v \in \Sob^{1,k}(X_\delta)$ near $Y$ for arbitrary $k \in \mathbb{N}$ according to \cite[Theorem 3]{gannot:bessel}. Now the same argument for $P(\lambda)$ applies to show that $P(\lambda)^*$ is injective, using that $v$ vanishes at $H_\delta$ in order to use Lemma \ref{lem:qnm_def:invertible2}. \end{inparaenum} \end{proof}

\noindent  It is now possible to prove Theorem \ref{theo:mainextended} for the case $\nu \geq 1$.
\begin{proof} [Proof of Theorem \ref{theo:mainextended} for $\nu \geq 1$] Given $k \in \mathbb{N}$, write $P^{(k)}(\lambda)$ for the operator 
	\[
	P(\lambda) : \mathcal{X}^k(X_\delta) \rightarrow {\Sob}^{0,k}(X_\delta).
	\]
	Proposition \ref{prop:nugeq1P} shows that $P^{(k)}(\lambda)$ has closed range and finite dimensional kernel in the half-plane $\Im \lambda > -\varkappa(k+1/2)$. According to Lemma \ref{lem:qnm_def:L2invertible}, there exists $\lambda_0$ with sufficiently large imaginary part so that $P^{(0)}(\lambda_0)$ is invertible. Clearly injectivity of $P^{(0)}(\lambda_0)$ implies injectivity of $P^{(k)}(\lambda_0)$. Furthermore, suppose that $f \in {\Sob}^{0,k}(X_\delta) \subseteq {\Sob}^{0}(X_\delta)$. If $u \in \mathcal{X}^0(X_\delta)$ denotes the unique solution to
	\[
	P^{(0)}(\lambda_0)u = f,
	\]
	then $u \in \mathcal{X}^k(X_\delta)$ since $\Im \lambda_0 > -\varkappa/2$. Thus $P^{(k)}(\lambda)$ is invertible in the upper-half plane wherever $P^{(0)}(\lambda)$ is invertible. Furthermore, $P^{(k)}(\lambda)$ is Fredholm of index zero in the half-plane $\Im \lambda > -\varkappa(k+1/2)$, since the index of a continuous family of left semi-Fredholm operators (namely those with closed range and finite dimensional kernel) is constant on connected components \cite[Theorem 19.1.5]{hormanderIII:1985}.\end{proof}

\subsection{The case $0 < \nu < 1$}

Fix a boundary operator $T(\lambda)$ as in Section \ref{sect:adsend} such that
\[
\mathscr{P}(\lambda) = \begin{pmatrix} P(\lambda) \\ T(\lambda) \end{pmatrix}
\]
is elliptic with respect to an angular sector $\Lambda \subseteq \mathbb{C}$ disjoint from $\mathbb{R}\setminus 0$. Assume that the principal part of $T(\lambda)$ is independent of $\lambda$.

\begin{proof} [Proof of Theorem \ref{theo:mainextended} for $0< \nu < 1$] Proposition \ref{prop:nugeq1P} has a natural analogue in this setting: the microlocal estimates on $X_\delta$ and hyperbolic estimates near $H_\delta$ are unchanged. Near $Y$ apply Proposition \ref{prop:nugeq1} for the case $0< \nu < 1$, referring to \cite[Sections 5.1, 5.2]{gannot:bessel} to see how the condition $T(\lambda)u \in H^{k+2-\mu}(Y)$ is used in general. Invertibility of $\mathscr{P}(\lambda)$ for $k=0$ follows as in Lemma \ref{lem:qnm_def:L2invertible}; the analysis of the adjoint problem is slightly more involved, see \cite[Section 5.2]{gannot:bessel}. The same argument as in the proof for $\nu \geq 1$ handles larger values of $k$.
\end{proof}

\subsection{Passing from $X_\delta$ to $X_0$} \label{subsect:passing}

Equipped with Theorem \ref{theo:mainextended}, it is now possible to deduce Theorems \ref{theo:maintheo1}, \ref{theo:maintheo2} as well. 

\begin{proof}[Proof of Theorems \ref{theo:maintheo1}, \ref{theo:maintheo2}]
	It suffices to prove Theorem \ref{theo:maintheo1} (namely when $\nu \geq 1$), since the proof of Theorem \ref{theo:maintheo2} (when $0 < \nu < 1$) is identical upon replacing $P(\lambda)$ with $\mathscr{P}(\lambda)$. Let $\Im \lambda > -\varkappa(k +1/2)$. Since 
	\[
	P(\lambda): \mathcal{X}^k(X_\delta) \rightarrow \Sob^{0,k}(X_\delta)
	\]
	has finite codimensional range for $\delta>0$, so does $P(\lambda)|_{\mathcal{X}^k(X_0)}$. To see this, fix a continuous extension map $E_k: \Sob^{0,k}(X_0) \rightarrow \Sob^{0,k}(X_\delta)$. It is then clear that $\ran E_k$ contains a subspace 
	\[
	S = \ran E_k \cap \ran P(\lambda)|_{\mathcal{X}^k(X_\delta)}  \subseteq \ran P(\lambda)|_{\mathcal{X}^k(X_\delta)}
	\] 
	which has finite codimension in $\ran E_k$. Now $E_k$ is injective, so $E_k^{-1}(S)$ has finite codimension in $\Sob^{0,k}(X_0)$. But if $f \in E_k^{-1}(S)$, then the equation
	\[
	P(\lambda)\tilde{u} = E_k f
	\]
	has a solution $\tilde{u} \in \mathcal{X}^k(X_\delta)$. Restricting $\tilde{u}$ to $X_0$ shows that $E_k^{-1}(S)$ is contained in $\ran P(\lambda)|_{\mathcal{X}^k(X_0)}$, hence the latter also has finite codimension in $\Sob^{0,k}(X_0)$; as the image of a continuous map, it is also closed.
	Therefore
		\[
	P(\lambda): \mathcal{X}^k(X_0) \rightarrow \Sob^{0,k}(X_0)
	\]
	is an analytic family of right semi-Fredholm operators in $\Im \lambda > -\varkappa(k+1/2)$. 
	
	The same argument shows that surjectivity on $\mathcal{X}^k(X_\delta)$ implies surjectivity on $\mathcal{X}^k(X_0)$, and indeed surjectivity on $\mathcal{X}^k(X_\delta)$ holds at $\lambda_0$ for $\Im \lambda_0 >0$ sufficiently large, as demonstrated in Lemma \ref{lem:qnm_def:L2invertible} and the proof of Theorem \ref{theo:mainextended} above. It then remains to show that $P(\lambda_0)$ is injective on $\mathcal{X}^k(X_0)$, and it suffices to do so for $k=0$. This follows as in the proof of Lemma \ref{lem:qnm_def:L2invertible}, using that the crucial Lemma \ref{lem:qnm_def:invertible1} also holds for $\delta =0$. The only subtlety involves the regularity of $u$ necessary for the integration by parts in Lemma \ref{lem:qnm_def:invertible1}; this can be handled by approximating $\phi u$ (in the notation of Lemma \ref{lem:qnm_def:L2invertible}) as in \cite[Lemma E.47]{zworski:resonances}

	Since the index of a continuous family of right semi-Fredholm operators is locally constant and it has just been shown that $P(\lambda_0)$ is invertible for $\Im \lambda_0>0$ sufficiently large, the proof of meromorphy is complete by analytic Fredholm theory. 
	
	The final step is to show that QNMs are in fact smooth up to $H_0$. In this paragraph $P(\lambda)$ acts on $\mathcal{X}^k(X_\delta)$ for a fixed $\delta \geq 0$. Near a pole $\lambda_0$ of $P(\lambda)^{-1}$ with $\Im \lambda_0 > -\varkappa(k+1/2)$, write
	\begin{align*}
	&P(\lambda) = P_{0} + (\lambda-\lambda_0)P_{1} + (\lambda - \lambda_0)^2P_2, \\
	&P(\lambda)^{-1} =  \sum_{j=1}^J(\lambda-\lambda_0)^{-j}A_{-j} +  A_0 + (\lambda-\lambda_0)H(\lambda), \noindent
	\end{align*}
	where $H(\lambda)$ is holomorphic near $\lambda_0$.
	Here the operators $A_{-j}$ are of finite rank for $j=1,\ldots,,J$. Analytic continuation gives the identities $P(\lambda)P(\lambda)^{-1} = 1$ and $P(\lambda)^{-1}P(\lambda) =1$, and hence
	\begin{equation}\begin{gathered} 
	P_0 A_{-J} = 0, \quad   A_0 P_0 +  A_{-1}P_1 = 1,  \\
	P_0 A_{-j} + P_1 A_{-j-1} = 0 \text{ for } j = 1,\ldots,J-1.
	\end{gathered} \label{eq:laurent}
	\end{equation}
	Restricting $A_0 P_0 +  A_{-1}P_1 = 1$ to the kernel of $P_0 = P(\lambda_0)$ shows that $\ker P(\lambda_0) \subseteq \ran A_{-1}$.

	It is now necessary to distinguish between $P(\lambda)$ on different spaces: temporarily write $P_\delta(\lambda)$ for $P(\lambda)$ acting on $\mathcal{X}^k(X_\delta)$, where $\delta \geq 0$. Fix $\delta >0$ and let 
	\[
	R: \mathcal{D}'(X_\delta) \rightarrow \mathcal{D}'(X_0)
	\]
	 be the restriction map. By analytic continuation from $\Im \lambda >0$ sufficiently large,
	\[
	P_0(\lambda)^{-1} = R \circ P_\delta(\lambda)^{-1} \circ E_k
	\]
	in the half-plane $\Im \lambda > -\varkappa(k+1/2)$. Therefore the residue $A_{0,-1}$ of $P_0(\lambda)^{-1}$ at a pole $\lambda_0$ with $\Im \lambda_0 > -\varkappa(k+1/2)$ is given by $R\circ A_{\delta,-1}\circ E_k$, where $A_{\delta,-1}$ is the residue of $P_\delta(\lambda)^{-1}$ at $\lambda_0$. On the other hand, if $u \in \mathcal{X}^k(X_\delta)$ satisfies $P_\delta(\lambda_0)u \in C^\infty(X_\delta \cup H_\delta)$, then $u \in C^\infty(X_\delta \cup H_\delta)$ by Proposition \ref{prop:regularity}. This observation combined with \eqref{eq:laurent} shows that the Laurent coefficients satisfy
	\begin{equation} \label{eq:smoothrange}
	\ran A_{\delta,-j} \subseteq C^\infty(X_\delta \cup H_\delta)
	\end{equation}
	for each $j = 1,\ldots,J$. In particular, $\ker P_0(\lambda_0) \subseteq \ran A_{0,-1} = \ran \left( R\circ A_{\delta,-1}\circ E_k\right)$, while $\ran \left( R\circ A_{\delta,-1}\circ E_k\right)  \subseteq C^\infty(X_0 \cup H_0)$ by \eqref{eq:smoothrange}, thus finishing the proof. \end{proof}

\section{Proof of Theorem \ref{theo:maintheo3}}

Given $m \in \mathbb{Z}$, define the space of distributions 
\[
\mathcal{D}'_m = \{ u \in \mathcal{D}' : (D_\phi - m)u = 0 \}.
\]
This definition applies to distributions on any of the spaces $\mathcal{M}_\delta, X_\delta$, or $Y$. Furthermore, if $T(\lambda)$ is axisymmetric, then one has the the mapping property 
\[
T(\lambda) : \mathcal{X}^k(X_\delta) \cap \mathcal{D}'_m(X_\delta) \rightarrow H^{k+1-\mu}(Y) \cap \mathcal{D}'_m(Y).
\]
Because it is assumed that $\Im \lambda > -\varkappa(k+1/2)$, it is enough to work with smooth functions (each of the kernels in Theorem \ref{theo:maintheo3} consists of smooth functions in that case). Given a fixed $\delta >0$, let
\[
X_- = X_\delta \setminus (X_0 \cup H_0),
\]
and $\OL{X}_- = X_- \cup H_\delta \cup H_0$ be its closure. 

\begin{prop} \label{prop:artificialwave}
	Let $m \in \mathbb{Z}$. Given $f \in C^\infty(X_\delta \cup H_\delta) \cap \mathcal{D}'_m(X_\delta)$ such that $\supp f \subset \OL{X}_-$, there exists a unique solution to the problem
	\[
	P(\lambda)u = f,\quad \supp u \subset \OL{X}_-,
	\]
	such that $u \in C^\infty(X_\delta \cup H_\delta) \cap \mathcal{D}'_m(X_\delta)$.
\end{prop}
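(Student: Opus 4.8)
The plan is to recognize Proposition~\ref{prop:artificialwave} as a characteristic (Goursat-type) Cauchy problem in the region $X \setminus \OL{X_+} = \{ x \in X : \Delta_r < 0\}$, which is bounded by the null hypersurface $\mathcal{H}^+ \cap X = \{ r = r_+\}$ on one side and by the artificial boundary $H = \{ \Delta_r = -\delta\}$ on the other. On this region $dr$ is timelike, so $P(\lambda)$ is strictly hyperbolic with respect to the foliation by the spacelike hypersurfaces $\{ r = \mathrm{const}\}$ --- the structure already exploited through Lemma~\ref{lem:hyperbolicestimate} --- while $\{ r = r_+\}$ is a characteristic limiting hypersurface along which the $D_r^2$--coefficient $\Delta_r$ of $P(\lambda)$ vanishes to first order, with $\partial_r\Delta_r(r_+) = 2(1-\alpha)(r_+^2 + a^2)\kappa > 0$. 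Since $f \in C^\infty(X)$ vanishes on the open set $X_+$, it vanishes to infinite order at $\{ r = r_+\}$. It therefore suffices to produce $u \in C^\infty(\{ x \in X : \Delta_r < 0\})$ solving $P(\lambda)u = f$ there and vanishing to infinite order at $\{ r = r_+\}$: the extension of such a $u$ by zero across $\mathcal{H}^+$ lies in $C^\infty(X)$, is supported in $X \setminus X_+$, and solves $P(\lambda)u = f$ on all of $X$.

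\textbf{Existence.} For each small $\eta > 0$ the hypersurface $\{ r = r_+ - \eta\}$ is spacelike, and the idea is to solve the ordinary Cauchy problem $P(\lambda)u_\eta = f$ on $\{ \Delta_r < 0,\ r < r_+ - \eta\}$ with vanishing Cauchy data on $\{ r = r_+ - \eta\}$, propagating towards decreasing $r$; this has a unique smooth solution by the classical energy method for strictly hyperbolic operators (the proof of Lemma~\ref{lem:hyperbolicestimate} yields the required inequalities for $P(\lambda)$ at each fixed $\lambda$, and the adjoint version is obtained as in the proof of Lemma~\ref{lem:schyperbolicadjoint}). Because $f$ vanishes to infinite order at $\{ r = r_+\}$, the Cauchy data of $u_\eta - u_{\eta'}$ on $\{ r = r_+ - \eta'\}$ is $\mathcal{O}((\eta')^\infty)$ when $\eta < \eta'$; since the energy estimates for $P(\lambda)$ retain control uniformly as the auxiliary surface approaches $\mathcal{H}^+$ --- the flux of the energy current through the null surface $\mathcal{H}^+$ having a favorable sign --- the family $\{ u_\eta\}$ converges, together with all of its derivatives and uniformly on every compact subset of $\{ \Delta_r < 0\}$, to a limit $u$ as $\eta \to 0$. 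Then $u$ is smooth there, solves $P(\lambda)u = f$, and vanishes to infinite order at $\{ r = r_+\}$, so extending by zero gives the required solution. (Alternatively one may invoke directly the solvability of the Goursat problem for the wave operator with data on the null hypersurface $\mathcal{H}^+$.)

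\textbf{Uniqueness and the azimuthal mode.} If $w \in C^\infty(X)$ is supported in $X \setminus X_+$ and $P(\lambda)w = 0$, then $w$ vanishes to infinite order at $\{ r = r_+\}$, so its Cauchy data on $\{ r = r_+ - \eta\}$ is $\mathcal{O}(\eta^\infty)$; the same energy estimates, in the limit $\eta \to 0$, force $w \equiv 0$ on $\{ \Delta_r < 0\}$, hence on $X$. This yields uniqueness already within $C^\infty(X)$, a fortiori within $C^\infty(X) \cap \mathcal{D}'_m(X)$. Finally, the coordinate change $\phi = \tilde\phi + F_\phi(r)$ used to extend the metric satisfies $\partial_\phi = \partial_{\tilde\phi}$, so $\partial_\phi$ is Killing for the extended metric and $[P(\lambda), D_\phi] = 0$; as $D_\phi$ also preserves the support condition, if $f \in \mathcal{D}'_m(X)$ then $(D_\phi - m)u$ solves the homogeneous problem in the same class and hence vanishes, i.e.\ $u \in \mathcal{D}'_m(X)$.

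\textbf{Main obstacle.} The delicate point throughout is the behavior at $\mathcal{H}^+ \cap X = \{ r = r_+\}$, which is at once characteristic for $P(\lambda)$ and, in the coordinate $r$, a locus of degeneracy. One must verify that the solutions of the auxiliary problems are genuinely smooth up to $\{ r = r_+\}$, with Taylor coefficients there determined recursively from those of $f$ --- so that infinite-order vanishing of $f$ is inherited by $u$ --- the recursion being governed by the indicial equation of $P(\lambda)$ at the regular--singular point $r = r_+$, whose exponents are $0$ and a second value $\gamma_1$ depending on $\lambda$, $a$, and $M$; and one must check that the hyperbolic energy estimates keep control up to $\mathcal{H}^+$. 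Both are consequences of the positivity of the surface gravity $\kappa$, which holds throughout $\{ r \geq r_+ - \delta\}$ because $\partial_r\Delta_r > 0$ there (cf.\ the Remark following Proposition~\ref{prop:microradialadjoint}).
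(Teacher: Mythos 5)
Your overall strategy (solve Cauchy problems from the spacelike slices $\{r=r_+-\eta\}$ with zero data and let $\eta\to 0$) is a reasonable alternative to the paper's route, but as written it has a genuine gap at exactly the point where all the work lies: uniform control up to the characteristic surface $\{\Delta_r=0\}$. Lemma \ref{lem:hyperbolicestimate} cannot be cited for this, since its hypotheses ($d\Phi$ timelike) fail at the horizon and its constants degenerate as the slab $[R_1,R_2]$ approaches $r_+$; the natural energy at $\{r=\mathrm{const}\}$ degenerates like $-\Delta_r\sim\rho$ in the $\partial_\rho$-direction, and the Gronwall rate blows up like $C/\rho$. In particular, your key assertion that the Cauchy data of $u_\eta$ on $\{r=r_+-\eta'\}$ is $\mathcal{O}((\eta')^\infty)$ itself requires an energy estimate on the slab adjacent to the horizon with controlled (at worst polynomially degenerating) constants --- precisely the estimate you have not proved --- so the argument is circular at its central step. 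The same applies to the claims that $u_\eta$ converges in $C^\infty$ (which needs commuted, higher-order versions of these uniform estimates) and that the limit vanishes to infinite order at $\rho=0$ (which needs quantitative $\mathcal{O}(\rho^N)$ bounds for every $N$). Your ``Main obstacle'' paragraph correctly identifies all of this but then defers it to ``positivity of the surface gravity,'' which is an observation, not a proof. In the paper this is exactly the content of the weighted identity \eqref{eq:artificialenergy}: the weight $\rho^N$ with $|N|$ beyond a threshold (depending on $\lambda$ and $m$) makes the term $N\rho^{N-1}E(\rho)$ dominate the remainder $R(u,du)$, using only that $\Delta_r$ vanishes to first order; uniqueness comes from $N\ll 0$, and existence is obtained not by a limiting Cauchy construction but by duality --- the adjoint estimate \eqref{eq:artificialadjoint} plus Hahn--Banach/Riesz, angular regularity from the arbitrariness of $s$, and smoothness in $\rho$ from partial hypoellipticity at the boundary.

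A second, related gap: you claim uniqueness and uniform energy estimates for the full operator in $C^\infty(X)$, without restricting to a fixed azimuthal mode, and only afterwards recover $u\in\mathcal{D}'_m$ by commuting with $D_\phi$. But the paper's energy argument needs the restriction to $\mathcal{D}'_m$ in an essential way: only after setting $D_\phi=m$ does $P(\lambda)$ differ from the product model $-\Delta_r\partial_\rho^2+\Delta_h$ by a first-order operator that can be absorbed into $R(u,du)$; without it there is the second-order cross term $2(1-\alpha)a\,D_rD_\phi$, and you never explain how your estimates handle it (or how the constants remain uniform in the angular frequency). So your mode-free uniqueness claim is stronger than what the paper establishes and is unsupported as written. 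To repair the proposal you would either have to prove the weighted (or uniform-in-$\eta$) estimates yourself --- essentially reproducing the paper's computation, mode by mode --- or carry out the cross-term cancellation explicitly; in its current form the proposal assumes the proposition's analytic core rather than proving it.
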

Delaying the proof of Proposition \ref{prop:artificialwave} for a moment, Theorem \ref{theo:maintheo3} is now established by precisely the same argument as \cite[Lemma 2.2]{hintz2015asymptotics}:

\begin{proof} [Proof of Theorem \ref{theo:maintheo3}]  First suppose that $\nu \geq 1$. As in the proof of Theorem \ref{theo:maintheo1}, write $P_\delta(\lambda)$ for $P(\lambda)$ acting on $\mathcal{X}^k(X_\delta)$. For injectivity, take $v \in \ker P_\delta(\lambda) \cap \mathcal{D}'_m(X_\delta)$. If the restriction of $v$ to $X_0$ is zero, then $v$ is supported in $\OL{X}_-$, which implies that $v=0$ on $X_0$ according to Proposition \ref{prop:artificialwave}. For surjectivity, suppose that $u \in \ker P_0(\lambda) \cap \mathcal{D}'_m(X_0)$. Extend $u$ arbitrarily to $X_\delta$ as an element $\widetilde{u} \in C^\infty(X_\delta \cup H_\delta) \cap \mathcal{D}'_m(X_\delta)$; according to Proposition \ref{prop:artificialwave}, 
	the equation \[
	P_\delta(\lambda)v = P_\delta(\lambda)\widetilde{u}
	\]
	has a unique solution $v \in C^\infty(X_\delta \cup H_\delta) \cap \mathcal{D}'_m(X_\delta)$ such that $\supp v \subset \OL{X}_-$. Then $\widetilde{u}-v \in \ker P_\delta(\lambda) \cap \mathcal{D}'_m(X_\delta)$ and $\widetilde{u}-v$ restricts to $u$ on $X_0$. The same argument applies when $0 < \nu < 1$ since $T(\lambda)$ is axisymmetric, replacing $P(\lambda)$ with $\mathscr{P}(\lambda)$.
\end{proof}

Although Proposition \ref{prop:artificialwave} is closely related to the results of \cite{vasy2010wave} on asymptotically de Sitter spacetimes, a direct proof is outlined here --- see also \cite[Lemma 1]{zworski2015resonances} for the same type of result (at least for the uniqueness part).

Define the Riemannian metric 
\[
h = \frac{1}{\Delta_\theta} d\theta^2 + \frac{\Delta_\theta\sin^2\theta}{(1-\alpha)^2} d\phi^2,
\]
which extends smoothly across the poles to $\mathbb{S}^2$. The idea is to apply an energy identity in $X_-$.

Let $\rho = r_+ - r$, which is positive in $X_-$. Given $u \in C^\infty(X_\delta \cup H_\delta)$, let $d_y u$ denote the differential of $u(\rho,\cdot)$ on $\mathbb{S}^2$. Then for any $N \in \mathbb{R}$,
\begin{multline*}
\partial_\rho \left( \rho^N \left(-\Delta_r |\partial_\rho u|^2 + h^{-1}(d_y u, d_y u) + |u|^2 \right) \right)\\= 2\rho^N \Re \left(  \partial_\rho \bar u \, (\Delta_r D_\rho^2 u) + h^{-1}(d_y \partial_\rho u, d_y \bar u)    \right)\\ + N \rho^{N-1} \left(-\Delta_r |\partial_\rho u|^2 + |d_y u |_h^2 + |u|^2\right) +\rho^N R,
\end{multline*}
where $R$ is a smooth quadratic form in $(u,du)$ which is independent of $N$ (at this stage $R = -(\partial_r \Delta_r) |\partial_\rho u|^2 + 2 \Re \partial_\rho u \cdot \bar u$).  Given $0< \varepsilon < \rho \leq \delta$, integrate over the region $[\varepsilon, \rho] \times \mathbb{S}^2$ and apply Green's theorem to obtain
\begin{multline*}
\rho^N E(\rho) - \varepsilon^N E(\varepsilon) = 2 \int_{[\varepsilon, \rho] \times \mathbb{S}^2} \rho^N_1 \Re \left(\partial_\rho \bar u  \left( \Delta_r D_\rho^2 u + \Delta_h u \right)\right) d\rho_1 \,dh \\ + N\int_{\varepsilon}^{\rho}  \rho_1^{N-1} E(\rho_1) \, d\rho_1 + \int_{[\varepsilon, \rho] \times \mathbb{S}^2} \rho^N R\, d\rho_1\, dh,
\end{multline*}
where $\Delta_h$ is the nonnegative Laplacian for $h$ and
\[
E(\rho) = \int_{\mathbb{S}^2} \left(-\Delta_r |\partial_\rho u|^2 + h^{-1}(d_yu , d_y u) + |u|^2 \right)\, dh.
\]
In general, $\Delta_r D_\rho^2 + \Delta_h$ differs from $P(\lambda)$ by a second order operator. On the other hand, after restricting to $\mathcal{D}_m'(X_\delta)$ this difference is of first order and can be absorbed into $R$. Thus
\begin{multline} \label{eq:artificialenergy}
\rho^N E(\rho) - \varepsilon^N E(\varepsilon) = 2 \int_{[\varepsilon, \rho] \times \mathbb{S}^2} \rho^N_1 \Re\left( \partial_\rho \bar u \, P(\lambda)u \right) d\rho_1 \,dh \\  + N\int_{\varepsilon}^{\rho}  \rho^{N-1}_1 E(\rho_1) \, d\rho_1 + \int_{[\varepsilon, \rho] \times \mathbb{S}^2} \rho^N_1 R \, d\rho_1\, dh
\end{multline}
for each $u \in C^\infty(X_\delta \cup H_\delta)\cap \mathcal{D}'_m(X_\delta)$, where now $R$ is a quadratic form in $(u,du)$ which depends on $\lambda$ and $m$.

\begin{proof} [Proof of Proposition \ref{prop:artificialwave}]
	To prove uniqueness, suppose that $u \in C^\infty(X_\delta \cup H_\delta) \cap \mathcal{D}_m'(X_\delta)$ satisfies $P(\lambda)u=0$ and $\supp u \subset \OL{X}_-$. Observe that $u$ vanishes to infinite order at $H_0$, and therefore $\rho^N E(\rho) \rightarrow 0$ as $\rho \rightarrow 0$. Apply \eqref{eq:artificialenergy} with $N$ large and negative. Since $\Delta_r$ vanishes to first order at $\{\rho = 0\}$, there exists $N< 0$ such that
	\[
	N \rho_1^{N-1}E(\rho_1) + \rho_1^N \int_{\mathbb{S}^2}  R \, dh \leq 0
	\]
	for each $\rho_1 \in [0,\delta]$. Letting $\varepsilon \rightarrow 0$ shows that $E(\rho) = 0$ for each $\rho \in [0,\delta]$, hence $u=0$. 
	
	For the existence part of the proof, note that $\Delta_r D_\rho^2 + \Delta_h$ is formally self-adjoint with respect $d\rho \, dh$ modulo first order terms, so \eqref{eq:artificialenergy} also applies to $P(\lambda)^*$ computed with respect to $d\rho \, dh$, with a different error $R$ (observe that this adjoint is different than $P(\lambda)^*$ considered in Section \ref{subsect:kleingordon}).

	Assume that $v \in C^\infty(\OL X_-) \cap \mathcal{D}_m'(X_-)$ satisfies $\supp v \subset \{ \rho < \delta/2 \}$. In particular, $E(\delta) = 0$.
	Now take $N$ large and positive --- there exists $N >0$ and $C>1$ such that
	\[
	N \rho_1^{N-1}E(\rho_1) + \rho_1^N \int_{\mathbb{S}^2}  R \, dh \geq C^{-1} N \rho_1^{N-1}E(\rho_1)
	\]
	for $\rho_1 \in [0,\delta]$. Furthermore $\varepsilon^N E(\varepsilon) \rightarrow 0$ as $\varepsilon\rightarrow 0$ in light of the $\varepsilon^N$ factor.
Combined with Cauchy--Schwarz, this implies 
	\[
	N  \int_0^\delta  \rho^{N-1}\|  v(\rho,\cdot) \|_{H^1(\mathbb{S}^2)}^2 \,d\rho\leq C \int_0^\delta  \rho^{N} \| P(\lambda)^* v (\rho,\cdot)\|^2_{H^0(\mathbb{S}^2)} \, d\rho
	\]
	for $N > 0$ sufficiently large. Furthermore, by commuting with an axially symmetric elliptic pseudodifferential operator on $\mathbb{S}^2$ of negative order and absorbing the commutator into the left hand side by possibly increasing $N$,
	\begin{equation} \label{eq:artificialadjoint}
	N  \int_0^\delta  \rho^{N-1}\|  v(\rho,\cdot) \|_{H^{-s+1}(\mathbb{S}^2)}^2 \,d\rho\leq C \int_0^\delta  \rho^{N} \| P(\lambda)^* v (\rho,\cdot)\|^2_{H^{-s}(\mathbb{S}^2)}\, d\rho.
	\end{equation}
	Thus $N>0$ depends on $\lambda, m$, and $s$.
	
	Now suppose that $f \in C^\infty(\OL{X}_-) \cap \mathcal{D}_m'(X_-)$ vanishes to infinite order at $H_0$, so in particular 
	\[
	f \in \rho^{(N-1)/2} L^2((0,\delta); H^{s-1}(\mathbb{S}^2))\cap \mathcal{D}'_m(X_-)
	\]
	for each $N > 0$ and $s\in\mathbb{R}$. Define the form $\ell$ mapping
	\[
	\ell : P(\lambda)^*v \mapsto \left< f, v \right>_{L^2((0,\delta)\times\mathbb{S}^2)},
	\]
	where $v\in  C^\infty(\OL{X}_-) \cap \mathcal{D}_m'(X_-)$ and $\rho < \delta /2$ on the support of $v$. Then the estimate \eqref{eq:artificialadjoint} shows that $\ell$ is bounded on the set of all such $P(\lambda)^*v$ equipped with the $\rho^{-N/2}L^2((0,\delta);H^{-s}(\mathbb{S}^2))$ norm, provided $N>0$ is sufficiently large.
	
	By Hahn-Banach and the Riesz representation, there exists 
	\[
	u \in \rho^{N/2} L^2((0,\delta); H^s(\mathbb{S}^2)) \cap \mathcal{D}_m'(X_-)
	\] 
	such that
	\[
	\left< f, v \right>_{L^2((0,\delta)\times\mathbb{S}^2)} = \left< u, P(\lambda)^*v \right>
	\]
	for each $v$ as above, where the pairing on the right is duality between 
	\[
	\rho^{N/2} L^2((0,\delta); H^s(\mathbb{S}^2)) \cap \mathcal{D}_m'(X_-) \Longleftrightarrow \rho^{-N/2} L^2((0,\delta); H^{-s}(\mathbb{S}^2)) \cap \mathcal{D}_m'(X_-).
	\] 
	and $v$ is as above. In particular $P(\lambda)u = f$ in $\mathcal{D}'_m(\{0 < \rho < \delta/2\})$. Of course one can always choose an arbitrary smooth extension of $f$ from $\OL{X}_-$ up to $\rho = 2\delta$ and then run the previous argument with $\delta$ replaced by $2\delta$, thus obtaining a distributional solution on all of $X_-$.
		
	Once $s>0$ and $N>0$ are sufficiently large, Sobolev regularity of $u$ in the $\rho$ variable follows from the usual ``partial hypoellipticity at the boundary" argument (using the high order of vanishing of $u$ and $f$ to account for the derivatives in the $\rho$ variable which degenerate at $H_0$), see \cite[Theorem B.2.9]{hormanderIII:1985}. Given sufficient regularity and order of vanishing, the solution $u$ is unique by the energy estimates for $P(\lambda)$; thus there exists a solution $u$ which is smooth on $\OL{X}_-$ and vanishes to infinite order at $H_0$.
\end{proof}

\section*{Acknowledgements}
I would like to thank Maciej Zworski for his encouragement and numerous helpful conversations. I am especially grateful to Semyon Dyatlov and Peter Hintz for illuminating discussions on the microlocal analysis of black hole spacetimes and for their continued interest in the problem. Finally, I would like to thank Andr\'as Vasy for clarifying the applicability of his results in \cite{vasy:2013} to Kerr--AdS metrics, and the anonymous referee for some useful comments.

\bibliographystyle{plain}

\bibliography{biblio.bib}

\end{document}